\documentclass[11pt,reqno]{amsart}
\usepackage[T1]{fontenc}
\usepackage{lmodern}
\usepackage{amsmath,amssymb,amsthm,mathtools,booktabs}
\usepackage[expansion=false]{microtype}
\usepackage{tikz}
\usetikzlibrary{calc,fit,backgrounds}
\usepackage{placeins}
\tikzset{
  cf diagram/.style={x=12pt,y=-12pt,line width=0.45pt,
    line cap=butt,line join=miter,
    every node/.style={font=\small,inner sep=0pt,outer sep=0pt}},
  cf edge/.style={line width=0.5pt,shorten <=4pt,shorten >=4pt},
  cf component/.style={draw=black!55,line width=0.4pt,
    dash pattern=on 2pt off 1.5pt,rounded corners=3pt,
    inner sep=4pt,outer sep=0pt,fill=none}
}
\newlength{\cfpathlabelwidth}
\newlength{\cfpathstep}
\usepackage[hidelinks]{hyperref}
\newtheorem{theorem}{Theorem}[section]
\newtheorem{proposition}[theorem]{Proposition}
\newtheorem{lemma}[theorem]{Lemma}
\newtheorem{corollary}[theorem]{Corollary}
\theoremstyle{definition}
\newtheorem{example}[theorem]{Example}
\theoremstyle{remark}
\newtheorem{remark}[theorem]{Remark}
\numberwithin{equation}{section}
\newcommand{\Sch}{\mathfrak S}
\newcommand{\key}{\kappa}
\newcommand{\Sky}{\operatorname{Sky}}
\newcommand{\calB}{\mathcal B}
\newcommand{\CC}{\mathbb C}
\newcommand{\ZZ}{\mathbb Z}

\title[Column factorizations and Schubert key products]{Factorizations by column blocks and Schubert polynomials as products of key polynomials}
\author{Xuanying Han}
\address{Department of Mathematics, Tianjin University of Finance and Economics, Tianjin 300222, P.R. China}
\email{hanxuanying@stu.tjufe.edu.cn}
\author{Shunli Ma}
\address{Department of Mathematics, Tianjin University of Finance and Economics, Tianjin 300222, P.R. China}
\email{mashunli@stu.tjufe.edu.cn}
\author{Sophie C. C. Sun}
\address{Department of Mathematics, Tianjin University of Finance and Economics, Tianjin 300222, P.R. China}
\email{sophiesun@tjufe.edu.cn}
\date{}
\keywords{Schubert polynomial, key polynomial, column interaction graph, pattern avoidance}
\hypersetup{pdftitle={Factorizations by column blocks and Schubert polynomials as products of key polynomials},pdfauthor={Xuanying Han; Shunli Ma; Sophie C. C. Sun},pdfsubject={Algebraic combinatorics}}
\begin{document}
\raggedbottom
\begin{abstract}
We classify the Schubert polynomials that are products of key
polynomials and explicitly determine their finest key
factorization. Their indexing permutations are precisely those
avoiding ten classical patterns of length six, which form the
minimal pattern basis. More generally, the connected components
of the column interaction graph give the unique finest
factorization of a dual flagged Weyl character into nonconstant
diagram characters. Every other such factorization is obtained
by grouping these components. We also prove Makhija's conjecture
on products of elementary symmetric polynomials in initial
alphabets, allowing repeated alphabets. In this case, the
elementary factors are uniquely determined, up to order, by the
nonempty Rothe columns.
\end{abstract}
\maketitle

\section{Introduction}

Schubert polynomials represent Schubert classes in the cohomology
of flag varieties~\cite{FMS}, and each is a nonnegative integer
linear combination of key polynomials~\cite{RS}. A Schubert
polynomial is itself a key polynomial exactly when its indexing
permutation avoids $2143$; see Reiner and Shimozono~\cite{RS}
and~\cite[Theorem~3.3]{MS}. We ask when a Schubert polynomial
is a product of keys and how to determine its finest such
factorization.

To study these products, we use dual flagged Weyl characters,
which we call \emph{diagram characters}. For a diagram
$D=(D_1,\ldots,D_m)$, where $D_j$ is the set of occupied
rows in column $j$, write $\chi_D$ for its character.
Rothe and skyline diagrams give Schubert and key polynomials,
respectively~\cite{FMS}. The product of the individual column
characters is a coefficientwise upper bound for $\chi_D$.
Equality in this bound is precisely a factorization into
individual column characters. Fan and Guo~\cite{FG} characterized
equality for Schubert and key polynomials; in the Schubert case,
it holds exactly when the indexing permutation avoids $1423$
and $1432$. Peng, Lin, and Sun~\cite[Theorem~1.1]{LPS}
gave the criterion for arbitrary diagrams.

We extend this question from individual columns to blocks
of columns. The obstruction to separating two columns is
recorded by the \emph{column interaction graph} $\Gamma_D$:
its vertices are the nonempty indexed columns of $D$, and
two columns are joined when they share an empty row above
a row occupied in both. We prove that the characters of
the column blocks multiply to $\chi_D$ exactly when no
edge of $\Gamma_D$ joins different blocks.

It remains to check that column partitions account for all
factorizations into diagram characters. Here we use the fact
that $\chi_D$ determines the multiset of nonempty columns of
$D$, a consequence of~\cite[Theorem~1.4]{HMSS}. Every factorization
into nonconstant diagram characters must therefore come from
a column partition. Together with the block criterion, this
shows that the connected components give the unique finest
factorization, up to order; every other such factorization
is obtained by grouping the component factors.\footnote{The
blocks need not have disjoint row sets; compare the disjoint-row
tensor products in~\cite[Example~1.6]{Anderson}. For factorizations
of Ehrhart polynomials of Schubitopes, see Dou, Fan, and
Liu~\cite{DFL}.}

The key-product problem now reduces to recognizing which
component characters are keys. A component character is a
key polynomial precisely when its columns form a chain
under inclusion. For Rothe diagrams, this condition gives
the pattern classification below.

To state it, write $\Sch_w$ for a Schubert polynomial and
$\key_\alpha$ for the key indexed by a \emph{weak composition}
$\alpha$, a finite sequence of nonnegative integers.
All factors use the fixed variables $x_1,x_2,\ldots$,
without relabeling. A \emph{key product} is a finite product
of nonconstant keys; a single factor and the empty product
$1$ are allowed. For $w=w_1\cdots w_n\in S_n$, the Rothe
diagram $D(w)$ has columns
$D_j(w)=\{i:i<w^{-1}(j),\ w_i>j\}$.
Classical pattern containment means having a subsequence
with the same relative order as the pattern. Set
\begin{align}\label{eq:basisintro}
\calB=\{&135264,136254,153264,163254,315264,316254,\notag\\
        &142635,142653,143625,143652\}.
\end{align}

\begin{theorem}\label{thm:intro}
The Schubert polynomial $\Sch_w$ is a key product if and only
if $w$ avoids every pattern in $\calB$. The ten patterns
in~\eqref{eq:basisintro} form the minimal classical pattern basis.

For such a permutation, group the nonempty Rothe columns
by the connected components of $\Gamma_{D(w)}$. Let
$\alpha^{(1)},\ldots,\alpha^{(s)}$ record the numbers of boxes
in each row of the resulting subdiagrams. Then the unique
finest key factorization, up to order, is
\[
 \Sch_w=\prod_{t=1}^s\key_{\alpha^{(t)}}.
\]
\end{theorem}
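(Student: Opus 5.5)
The plan is to reduce the statement to the three structural facts described in the introduction, and then do a purely combinatorial analysis of Rothe diagrams.

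\emph{Step 1: the block criterion.} For any partition of the nonempty columns of $D$ into blocks with no edge of $\Gamma_D$ between different blocks, the block characters multiply to $\chi_D$. Hence $\chi_D=\prod_t\chi_{D^{(t)}}$, where $D^{(t)}$ are the connected components of $\Gamma_D$. Since $\chi_D$ determines the multiset of nonempty columns (by \cite[Theorem~1.4]{HMSS}), any factorization of $\chi_D$ into nonconstant diagram characters comes from a column partition. The block criterion then forces that partition to be a coarsening of the components.

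\emph{Step 2: which products are keys.} A key $\key_\alpha$ is the character of the skyline diagram, whose columns form a chain under inclusion. Suppose $\Sch_w=\prod_i\key_{\beta^{(i)}}$. Then $\chi_{D(w)}=\chi_E$, where $E$ is the column-disjoint union of the skylines, so $E$ and $D(w)$ have the same multiset of nonempty columns. Within each skyline factor the columns are nested and, by Step 1, form a union of components of $\Gamma_{D(w)}$. I would show that a union of components whose columns form a chain has $\Gamma$ connected within each factor up to regrouping, and that each component itself has chain columns. In the other direction, if every component of $\Gamma_{D(w)}$ has columns forming a chain, then its character is the key of its row-count composition. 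The ``if and only if'' thus becomes:
\begin{quote}
$\Sch_w$ is a key product $\iff$ the columns within every connected component of $\Gamma_{D(w)}$ are totally ordered by inclusion.
\end{quote}
This also yields the finest factorization with the exponents $\alpha^{(t)}$ stated in the theorem. Uniqueness up to order follows from the coarsening statement in Step 1, because a key is a nonconstant diagram character.

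\emph{Step 3: pattern translation (the main obstacle).} We must show that $w$ has two $\Gamma$-adjacent (or merely same-component) Rothe columns that are incomparable if and only if $w$ contains a pattern of $\calB$. Every pattern of $\calB$ has length six, so the structure should be:
\begin{itemize}
\item a $1423$/$1432$-type interaction (shared empty row above a common occupied row), which creates an edge;
\item together with a $2143$-type incomparability between columns in the same component.
\end{itemize}
Concretely, I would first check by direct computation that each of the ten patterns has a Rothe diagram violating the chain condition inside a component. Then I would show the violation persists under pattern embedding: a component-level violation in the pattern lifts to the larger diagram, since Rothe columns of $w$ restrict to those of the pattern on the relevant rows and columns.

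For the converse, I would take a minimal violation: two incomparable columns $j<k$ joined by a shortest path in $\Gamma_{D(w)}$. First I would argue that one may assume they are directly adjacent, or at distance two through a comparable column. Then I would read off witnessing entries: $w^{-1}(j)$, $w^{-1}(k)$, the entries creating the shared empty row, and the entries witnessing the two incomparabilities. This gives at most six positions, whose relative order I would case-analyse to land in $\calB$. Reducing the path length is the delicate part, and I would try to show that along any path the inclusion type propagates unless a short violation already exists.

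\emph{Step 4: minimality of the basis.} Finally, I would check that no pattern in $\calB$ contains another (all have length six, so this is immediate) and that each is genuinely non-avoiding, as verified in Step 3. This makes $\calB$ the minimal basis.
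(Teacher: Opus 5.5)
Your architecture matches the paper's. Column recovery and the block criterion give the component factorization and its uniqueness. A component character is a key exactly when its columns form an inclusion chain, and the pattern statement translates that chain condition for $D(w)$.

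Steps 1, 2 and 4 are fine up to wording. In Step 2, $\chi_{D(w)}=\chi_E$ is a \emph{consequence} of the product form of column recovery, not the premise. This is because $\chi_E$ is in general only bounded above by the product of the skyline characters. Also, your remark about $\Gamma$ being ``connected within each factor up to regrouping'' should simply say that a subset of a chain is a chain. Your argument that an occurrence of a pattern in $\calB$ forces a failure is correct: you restrict $D(w)$ to the rows and columns of the occurrence and lift edges and incomparabilities. This is a tidy repackaging of the paper's direct reading of witnesses.

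The gap is the converse in Step 3, which carries the real content and which you leave as ``I would try to show''. The mechanism you suggest, that the inclusion type propagates along a path, is not what happens. On the path $D_2-D_3-D_5$ of $D(142635)$, consecutive columns are comparable but the endpoints are not.

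The missing idea is a minimal-distance argument. Choose incomparable $A,C$ in one component at minimum graph distance, with a shortest path between them. Every pair on the path other than $\{A,C\}$ is then comparable. So each interior column is comparable with both endpoints but cannot lie between them, and hence is contained in $A\cap C$ or contains $A\cup C$. Consecutive interior columns of opposite types are impossible, since their witness would also witness $A$--$C$. The largest interior column (first type) or the smallest (second type) inherits the witnesses of the first and last edges, giving a path $A-B-C$.

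You then need the northwest property of Rothe diagrams: for $j<k$, every row of $D_k\setminus D_j$ lies below every row of $D_j$. This shows that $B\subseteq A\cap C$ would make $A,C$ adjacent, so only $B\supseteq A\cup C$ remains.

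Finally, ``at most six positions'' is unjustified as stated. In the distance-two case you have $j,k,\ell$, two witness pairs, and the entries $x,y$ exhibiting incomparability, up to nine entries. Six suffice only for the following reason. If $(u,v)$ witnesses $D_j-D_\ell$, nonadjacency of $D_j,D_k$ forces $v<k$. Then $w^{-1}(v)\in D_j\setminus D_k$, so $v$ replaces $x$, while $(u,y)$ witnesses $D_\ell-D_k$. This is what yields exactly $142635,142653,143625,143652$, alongside the six patterns from the adjacent case.
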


The same column criterion also applies to products of elementary
symmetric polynomials. For $1\le b\le a$, the polynomial
$e_b(x_1,\ldots,x_a)$ is the character of the interval column
$\{a-b+1,\ldots,a\}$. An \emph{elementary product} is a finite
product of these polynomials, allowing repeated initial
alphabets, a single factor, and the empty product $1$.

If each initial alphabet is used at most once, the product
is a \emph{standard elementary monomial}. Hatam, Johnson,
Liu, and Macaulay~\cite{HJLM} gave determinantal formulas
for expansions of Schubert polynomials in this basis when
the indexing permutation avoids thirteen specified patterns.
Woodruff~\cite[Theorem~4]{Woodruff} proved that a Schubert
polynomial is a single standard elementary monomial exactly
when its permutation avoids $312$ and $1432$. For products
allowing repeated initial alphabets,
Makhija~\cite[Section~1.7]{Makhija} conjectured the four-pattern
criterion below and gave a pipe-dream argument for
sufficiency~\cite[Section~2]{Makhija}. We prove the converse
and determine the factors uniquely.

\begin{theorem}\label{thm:elementary}
The Schubert polynomial $\Sch_w$ is an elementary product
if and only if $w$ avoids $1423$, $1432$, $3142$, and $4132$,
as conjectured by Makhija~\cite[Section~1.7]{Makhija}.
In this case the factorization is unique, up to order, and is
\begin{equation}\label{eq:elementaryformula}
 \Sch_w=\prod_{D_j(w)\ne\varnothing}
 e_{|D_j(w)|}(x_1,\ldots,x_{\max D_j(w)}).
\end{equation}
\end{theorem}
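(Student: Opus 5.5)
The plan is to derive Theorem~\ref{thm:elementary} from the block factorization machinery described in the introduction: the block criterion via $\Gamma_D$, and the fact that $\chi_D$ determines the multiset of nonempty columns of $D$ \cite[Theorem~1.4]{HMSS}. Each factor $e_b(x_1,\ldots,x_a)$ is the character of the interval column $\{a-b+1,\ldots,a\}$. So an elementary product $\prod_k e_{b_k}(x_1,\ldots,x_{a_k})$ equals $\chi_E$, where $E$ is the diagram whose columns are these intervals, placed in any order. If $\Sch_w=\chi_{D(w)}$ is such a product, then $\chi_{D(w)}=\chi_E$. By the column-multiset result, the nonempty columns of $D(w)$ are, as a multiset, exactly these intervals. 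Each nonempty Rothe column $D_j(w)$ must therefore be a bottom-justified interval $\{m-b+1,\ldots,m\}$ with $m=\max D_j(w)$ and $b=|D_j(w)|$. This already gives uniqueness and the formula~\eqref{eq:elementaryformula}: the interval $\{a-b+1,\ldots,a\}$ determines the pair $(a,b)$, so the factors are read off from the nonempty Rothe columns.

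It remains to identify the permutations for which this happens. We need two conditions.

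\textbf{(i)} Every column of $D(w)$ is an interval ending at its maximum. Such an interval has no empty row below an occupied row inside its span; equivalently, it has the form $\{m-b+1,\ldots,m\}$.

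\textbf{(ii)} The product of the column characters equals $\chi_{D(w)}$. By the block criterion with singleton blocks, this holds exactly when $\Gamma_{D(w)}$ has no edges. By Fan--Guo~\cite{FG}, it is also equivalent to $w$ avoiding $1423$ and $1432$.

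Conversely, if (i) and (ii) hold, then $\Sch_w=\prod_j\chi_{D_j(w)}=\prod_j e_{|D_j|}(x_1,\ldots,x_{\max D_j})$, which is sufficiency. So the theorem reduces to the following pattern statement: assuming $w$ avoids $1423$ and $1432$, every Rothe column is an interval $\{m-b+1,\ldots,m\}$ if and only if $w$ also avoids $3142$ and $4132$.

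The main obstacle is this pattern translation. Consider a column $j=w_k$. Its rows are the $i<k$ with $w_i>j$. The column fails to be a bottom-justified interval exactly when there exist $i<i'<k$ with $w_i>j$ and $w_{i'}<j$; rows below $i'$ that still lie in the column only matter for the shape.

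For the ``only if'' direction, an occurrence of $3142$ or $4132$ at positions $p_1<p_2<p_3<p_4$ gives $j=w_{p_4}$, an occupied row $p_1$, and an empty row $p_2$ with $w_{p_2}<j$. Since $w_{p_3}>j$, row $p_3$ is occupied below the gap, so the column is not an interval.

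Conversely, suppose a column has a gap: an occupied row $i$, then an empty row $i'$ with $w_{i'}<j$, then an occupied row $i''<k$ with $w_{i''}>j$. Then $w_i,w_{i'},w_{i''},j$ form the pattern $3142$, $4132$, $3241$, or $4231$, depending on the relative order of $w_i$ and $w_{i''}$ and on where $w_{i'}$ sits. This must be refined: choose $i'$ empty with $w_{i'}<j$ chosen minimal or maximal as needed. If $w_{i'}$ is not below both of $w_i,w_{i''}$ in the required way, relocate it using the avoidance of $1423$/$1432$ or by picking an extremal gap.

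I expect the delicate case analysis to be here. In particular, the pattern $x\,y\,z\,j$ with $y<j<x,z$ always has the shape $3142$ or $4132$ once we note $y<j<\min(x,z)$, which seems to settle it directly. I would verify that the relative orders indeed force exactly these two patterns. Finally, I would cross-check sufficiency against Makhija's pipe-dream argument as a sanity check.
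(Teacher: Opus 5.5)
\textbf{Gap in the necessity step.} Your route is the paper's: recover the columns from the factorization, conclude that they are intervals and that $\Sch_w$ is the product of its column characters, then translate ``edgeless'' and ``interval columns'' into avoidance of $1423,1432$ and $3142,4132$. The problem is the first step, which uses a false identity. You claim that $\prod_k e_{b_k}(x_1,\ldots,x_{a_k})$ equals $\chi_E$, where $E$ is the diagram whose columns are the intervals $I_k=[a_k-b_k+1,a_k]$. In general one only has $\chi_E\le_{\mathrm{coeff}}\prod_k F_{I_k}$, with equality exactly when $\Gamma_E$ is edgeless (Theorem~\ref{thm:block}). For example, $e_1(x_1,x_2)^2=(x_1+x_2)^2$, whereas $\chi_{(\{2\},\{2\})}=x_1^2+x_1x_2+x_2^2=\Sch_{1423}$. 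If your identity held, $\Sch_{1423}$ would be an elementary product, although $1423$ is a forbidden pattern.

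So you cannot rewrite $\Sch_w=\prod_kF_{I_k}$ as $\Sch_w=\chi_E$ and then apply ``$\chi_D$ determines its column multiset.'' That black box, stated for a single diagram character, says nothing about a product of column characters that is not itself $\chi_E$. Both the necessity direction (the part Makhija left open) and your uniqueness claim depend on this step.

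\textbf{How to repair it.} Column recovery should be applied to the product itself. The functional $S\mapsto\max_{[x^\beta]f\ne0}\sum_{i\in S}\beta_i$ is additive under multiplication of polynomials with nonnegative coefficients. On $\chi_D$ it equals $\sum_A m_D(A)\,r_A(S)$, and on $F_{I_k}$ it equals $r_{I_k}(S)$. Linear independence of the Schubert-matroid rank functions $r_A$ then gives the multiset equality. This is the second clause of Proposition~\ref{prop:recovery}, with each $E^{(t)}$ a single interval column. Equivalently, you can observe that $\prod_kF_{I_k}$ and $\chi_E$ have the same support, and that recovery depends only on the support.

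Either way, the $I_k$ are exactly the nonempty columns of $D(w)$. Hence $\Sch_w=\prod_jF_{D_j(w)}$, and the case of interacting intervals is excluded only a posteriori. From there your argument matches Proposition~\ref{prop:elementarygeneral} and Lemma~\ref{lem:elementarypatterns}.

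\textbf{Smaller points.} Your statement that a column fails to be an interval ``exactly when'' some $i<i'<k$ has $w_i>j>w_{i'}$ is false. For $w=312$ and $j=2$ this condition holds, yet $D_2=\{1\}$ is an interval. The detour through $3241$ and $4231$ is also unnecessary. Any vacant row above $w^{-1}(j)$ carries a value below $j$, so a gap always gives $3142$ or $4132$, as you eventually note.
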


In this case, Proposition~\ref{prop:recovery} forces the elementary
factors to come from the individual Rothe columns. The criterion for
separating these columns~\cite{FG,LPS} and the characterization
of interval Rothe columns~\cite[Proposition~2.3]{LMS} give the
four forbidden patterns. Each interval column determines its
elementary factor, which also proves uniqueness.

Section~\ref{sec:prelim} recalls the determinant model and
shows that a diagram character determines the multiset of
its nonempty columns. Section~\ref{sec:block} proves the criterion
for column blocks and the finest factorization.
Sections~\ref{sec:keys} and~\ref{sec:elementary} apply these
results to key and elementary products, respectively.

\section{Columns and diagram characters}\label{sec:prelim}
All vector spaces are over $\CC$. We use
$[N]=\{1,\ldots,N\}$ and $[a,b]=\{a,a+1,\ldots,b\}$,
with $N$ large enough to contain the rows of every diagram
under discussion. For row sets $A,B$, the difference
$A\setminus B$ consists of the rows belonging to $A$ but
not to $B$. Empty columns contribute $1$ and play
no part in column partitions. Copies of the same nonempty
column retain their separate indices.

\subsection{Allowed upward moves and column characters}
Allowed moves in a column shift its boxes upwards without
changing their order. For two row sets of size $b$,
$C=\{c_1<\cdots<c_b\}$ and $A=\{a_1<\cdots<a_b\}$,
we write $C\preceq A$ in \emph{Gale order} if $c_i\le a_i$
for every $i$, and call $C$ \emph{allowed} for $A$.
For diagrams $E=(E_1,\ldots,E_m)$ and
$D=(D_1,\ldots,D_m)$ with $|E_j|=|D_j|$ for every $j$,
write $E\preceq D$ if $E_j\preceq D_j$ for every $j$.
We record the row counts of $E$ by the monomial
\[
 x^E=\prod_j\prod_{i\in E_j}x_i.
\]
Thus the exponent of $x_i$ is the number of cells in row $i$ of $E$.

We encode these allowed row choices by minors of a generic
upper triangular $N\times N$ matrix $Y=(y_{ij})$.
We write $Y_A^C$ for its submatrix with row set $C$ and
column set $A$, both in increasing order. For a column $A$,
the determinants $\det(Y_A^C)$ with $C\preceq A$ are
called its \emph{allowed~minors}. For $C=A$, this is the
\emph{principal~minor} $\det(Y_A^A)$. For a diagram $D$,
taking one allowed minor from each column gives the products
that span the polynomial space
\[
 M_D=\operatorname{span}_{\CC}
 \left\{\prod_j\det(Y_{D_j}^{E_j}):E\preceq D\right\}.
\]
We grade by rows, assigning weight $x_i$ to $y_{ij}$.
The coefficient $[x^\beta]\chi_D$ is the dimension of the
part of $M_D$ having weight $x^\beta$.

For a single column,
\begin{equation}\label{eq:FA}
 F_A=\chi_{(A)}=\sum_{C\preceq A}x^C,
 \qquad x^C=\prod_{i\in C}x_i.
\end{equation}
For $C=\{c_1<\cdots<c_b\}\preceq
A=\{a_1<\cdots<a_b\}$, the minor $\det(Y_A^C)$ contains
$\prod_i y_{c_i,a_i}$ with coefficient $1$, hence is nonzero.
As different $C$ give different row weights, the allowed
minors are independent.

For several columns, the products of allowed minors need not
be independent. Since they span $M_D$, we obtain the
coefficientwise bound
\[
 \chi_D\le_{\mathrm{coeff}}\prod_jF_{D_j}
       =\sum_{E\preceq D}x^E.
\]
Each product of allowed minors is nonzero, so every monomial
on the right also occurs in $\chi_D$. Thus the two sides have
the same support, although their coefficients may differ.

For $D=(\{2\},\{2\})$, each column has the two allowed
minors $y_{12}$ and $y_{22}$. Consequently,
\[
 M_D=\operatorname{span}_{\CC}
 \{y_{12}^2,y_{12}y_{22},y_{22}^2\},\qquad
 \chi_D=x_1^2+x_1x_2+x_2^2.
\]
By contrast, $(F_{\{2\}})^2=(x_1+x_2)^2$ has coefficient
$2$ at $x_1x_2$. The two choices of column minors give the
same product $y_{12}y_{22}$, which contributes only once to
the character.

For characters with $0$--$1$ coefficients, see Guo, Lin,
and Peng~\cite{GLP}. For related bounds on Schubert
monomials, see Guo and Lin~\cite{GuoLin}.

Permuting the columns does not change $M_D$ or $\chi_D$.
When comparing diagrams, we place them in a common
$N\times N$ grid, enlarging $N$ and adding empty columns
if necessary. These operations leave their spaces and
characters unchanged.

\subsection{Skyline diagrams and key polynomials}
For a weak composition $\alpha\in\ZZ_{\ge0}^N$, the
\emph{skyline diagram} $\Sky(\alpha)$ has $\alpha_i$
left-justified cells in row $i$. Its character is
$\key_\alpha$, and $\key_0=1$. The nonempty columns of
$\Sky(\alpha)$ are the sets
\begin{equation*}
 \{i:\alpha_i\ge1\}\supseteq\{i:\alpha_i\ge2\}\supseteq\cdots.
\end{equation*}
These columns form an \emph{inclusion chain}, meaning that
they are pairwise comparable by inclusion. Conversely, any
inclusion chain of columns can be put in skyline order.
The number of boxes in each row gives the indexing composition.
Figure~\ref{fig:skyline} shows $\Sky(3,0,2,1)$, with column
sets $\{1,3,4\}\supseteq\{1,3\}\supseteq\{1\}$.

\begin{figure}[htbp]
\centering
\begin{tikzpicture}[cf diagram]
  \foreach \r in {1,...,4}
    \node[anchor=east] at (-0.95,\r-1) {$\r$};
  \foreach \c in {1,...,3}
    \node at (\c-1,-1.05) {$\c$};
  \foreach \c in {0,1,2}
    \fill[black!15] (\c-0.5,-0.5) rectangle (\c+0.5,0.5);
  \foreach \c in {0,1}
    \fill[black!15] (\c-0.5,1.5) rectangle (\c+0.5,2.5);
  \fill[black!15] (-0.5,2.5) rectangle (0.5,3.5);
  \draw (-0.5,-0.5) rectangle (2.5,0.5);
  \draw (0.5,-0.5)--(0.5,0.5) (1.5,-0.5)--(1.5,0.5);
  \draw (-0.5,1.5)--(1.5,1.5)--(1.5,2.5)--(0.5,2.5)
        --(0.5,3.5)--(-0.5,3.5)--cycle;
  \draw (0.5,1.5)--(0.5,2.5) (-0.5,2.5)--(0.5,2.5);
  \node at (1,4.4) {$\Sky(3,0,2,1)$};
\end{tikzpicture}
\caption{The skyline diagram $\Sky(3,0,2,1)$.}
\label{fig:skyline}
\end{figure}

For example, $\key_{(1,0)}=x_1$ and
\mbox{$\key_{(0,1)}=x_1+x_2$}. Trailing zeros do not affect
a key.

For Rothe diagrams, $\chi_{D(w)}=\Sch_w$. These
identifications use the conventions of~\cite{FMS,LPS}.
When omitting empty Rothe columns, we keep the original
labels of the remaining columns.

\subsection{The interaction graph}
A pair of rows $i<j$ is a \emph{witness} for an interaction
between $A$ and $B$ if row $i$ is vacant in both columns
and row $j$ is occupied in both.
\[
\begin{tikzpicture}[cf diagram,baseline=(current bounding box.center)]
  \node at (0,-0.95) {$A$};
  \node at (2.6,-0.95) {$B$};
  \node[anchor=east] at (-0.95,0) {$i$};
  \node[anchor=east] at (-0.95,1.4) {$j$};
  \fill[black!15] (-0.5,0.9) rectangle (0.5,1.9);
  \fill[black!15] (2.1,0.9) rectangle (3.1,1.9);
  \draw (-0.5,0.9) rectangle (0.5,1.9);
  \draw (2.1,0.9) rectangle (3.1,1.9);
\end{tikzpicture}
\]
Only the two witness rows are shown; the columns need not be
consecutive in the diagram. A nonempty column $[a]$ is isolated
in $\Gamma_D$, since every row above an occupied row is also
occupied.

Peng, Lin, and Sun~\cite[Theorem~1.1]{LPS} characterized equality
in the coefficientwise bound for two columns:
\begin{equation}\label{eq:twocol}
 \chi_{(A,B)}=F_AF_B
 \quad\Longleftrightarrow\quad
 A\text{ and }B\text{ do not interact}.
\end{equation}

\subsection{Column multisets and character factorizations}
The following proposition is a direct consequence of
Hafner, M\'esz\'aros, Setiabrata, and St.~Dizier's
Theorem~1.4~\cite{HMSS}. We include a short proof and
its application to products of diagram characters.

\begin{proposition}\label{prop:recovery}
On a common $[N]$, the monomials occurring in $\chi_D$
determine the multiset of nonempty columns of $D$.
In particular, if
\[
 \chi_D=\chi_{E^{(1)}}\cdots\chi_{E^{(k)}},
\]
then the nonempty columns of the $E^{(t)}$ together are
exactly the nonempty columns of $D$, with multiplicity.
\end{proposition}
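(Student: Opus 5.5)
Since the support of $\chi_D$ equals the support of $\prod_j F_{D_j}=\sum_{E\preceq D}x^E$, it suffices to show that the set of exponent vectors $S_D=\{\mathrm{wt}(E):E\preceq D\}$ determines the multiset of nonempty columns. I would recover the columns one at a time by a greedy extremal argument, and peel them off by induction on the number of cells.

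\emph{Step 1: the bottom-heavy extreme.} Order weak compositions of the same total size by reverse lexicographic comparison from the bottom row: prefer more cells in row $N$, then in row $N-1$, and so on. Because Gale moves only push cells upward, the unique maximal element of $S_D$ in this order is $\mathrm{wt}(D)$ itself. Hence the row counts $r_i=\#\{j:i\in D_j\}$ are determined by the support.

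\emph{Step 2: extract one column.} Row counts alone do not determine the columns, so I would use a second family of extremal monomials. For each $k$, consider the monomials in $S_D$ of total degree $|D|$ that keep the bottom $N-k$ rows at their maximal counts $r_{k+1},\dots,r_N$. I would then find the Gale-minimal configuration on the top $k$ rows, namely the configuration obtained by pushing all remaining cells as high as possible: a column $D_j$ with $p_j$ cells in rows $\le k$ becomes $[p_j]$ there. The weight on rows $1,\dots,k$ is then the conjugate of the partition $(p_j)_j$. Reading off these topmost weights for every $k$ recovers the partitions $\lambda^{(k)}=\mathrm{sort}(|D_j\cap[k]|)_j$. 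This is the lex-minimum of the fiber, so it is determined by the support.

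\emph{Step 3: reconstruct the multiset.} The family $\{\lambda^{(k)}\}_k$ alone does not pin down the multiset, because one must know how the counts are linked across $k$ within a single column. This is the main obstacle. The step I would try first is to use sets $T\subseteq[N]$ more generally. For each $T$, the maximum over $S_D$ of $\sum_{i\in T}\beta_i$ subject to keeping the rows below fixed gives the data $\sum_j|D_j\cap T|$-type statistics. Plain sums over a single $T$ are insufficient, since they give only the row counts again. So I would instead use the lex-extremal monomials relative to a row order that interleaves $T$. The maxima of linear functionals over the Gale polytope of a column $A$ are attained at explicit sets, and the support of $\chi_D$ is the Minkowski sum of the column supports; this is the Schubitope / generalized-permutahedron structure behind \cite[Theorem~1.4]{HMSS}. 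Minkowski sums of the polymatroids $P_{D_j}$ have rank function $\sum_j\rho_{D_j}(T)$, where $\rho_A(T)=\#\{a\in A: \text{the number of elements of }T\text{ up to }a\ldots\}$, explicitly $\rho_A(T)=\max_{C\preceq A}|C\cap T|$. The support therefore determines $g(T)=\sum_j\rho_{D_j}(T)$ for all $T$. I would then invert: $\rho_A$ for an upper-closed-type test set $T=[a]\cup\{t\}$ detects the indicator of specific columns. By a M\"obius-type inversion over a well-chosen family of $T$, or by induction removing a column with maximal bottom cell whose value of $g$ is forced, this recovers the multiset. For this last identifiability step I would simply cite \cite[Theorem~1.4]{HMSS}, which states that the Schubitope determines the diagram up to column permutation.

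\emph{Step 4: the product statement.} Given $\chi_D=\prod_t\chi_{E^{(t)}}$, the support of the product is the Minkowski sum of the supports, since all coefficients are nonnegative. This sum equals the support of $\chi_{E}$, where $E$ is the concatenation of all the $E^{(t)}$ placed on a common $[N]$, because supports of diagram characters are sums of column supports. Applying the first part to $D$ and $E$ gives equal multisets of nonempty columns.
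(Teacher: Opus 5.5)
Your argument is correct and, once the exploratory Steps~1--2 are set aside (they are not needed), it is the paper's proof. The support determines $g(T)=\max_{\beta}\sum_{i\in T}\beta_i=\sum_j\rho_{D_j}(T)$ for every $T$, identifiability comes from \cite[Theorem~1.4]{HMSS}, and the product case follows because supports, and hence these maxima, add under multiplication of polynomials with nonnegative coefficients. The paper states the HMSS input as linear independence of the Schubert matroid rank functions $r_A=\rho_A$, so the multiplicities are read off directly from $g=\sum_A m_D(A)\,r_A$.
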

\begin{proof}
For each nonempty $A\subseteq[N]$, set
\[
 r_A(S)=\max_{C\preceq A}|C\cap S|
 \qquad(S\subseteq[N]).
\]
This is the maximum number of boxes of $A$ that can lie in
rows of $S$ after allowed moves. It is the rank function of the
Schubert matroid indexed by $A$. The functions $r_A$, over
nonempty $A\subseteq[N]$, are linearly independent by
\cite[Theorem~1.4]{HMSS}.

Let $m_D(A)$ be the multiplicity of a nonempty column $A$ in
$D$. Every monomial in $\chi_D$ is the weight of an allowed
choice of rows. For fixed $S$, choose a maximizing allowed
row set in each column. The resulting product of minors is
nonzero, so its weight occurs in $\chi_D$.
Grouping equal columns gives, for every $S\subseteq[N]$,
\[
 \begin{aligned}
 \max_{[x^\beta]\chi_D\ne0}\sum_{i\in S}\beta_i
   &=\sum_{D_j\ne\varnothing}r_{D_j}(S)\\
   &=\sum_{\varnothing\ne A\subseteq[N]}m_D(A)r_A(S).
 \end{aligned}
\]
Linear independence of the functions $r_A$ therefore determines
every multiplicity $m_D(A)$.

These maxima add under multiplication of polynomials with
nonnegative coefficients. Applying this to the asserted product
identity and using linear independence once more gives
$m_D(A)=\sum_t m_{E^{(t)}}(A)$ for every nonempty $A$,
as required.
\end{proof}

\section{Factorization by column blocks}\label{sec:block}
We prove that a partition of the columns gives a factorization
exactly when no interaction edge joins different blocks.
Together with Proposition~\ref{prop:recovery}, this yields the
unique finest factorization into nonconstant diagram
characters, up to order.

\subsection{Block factorization}
For a set $J$ of column indices, write $D[J]$ for the
subdiagram consisting of those columns.

\begin{theorem}\label{thm:block}
Let $\pi$ be a partition of the indices of the nonempty
columns of $D$. Then
\[
 \chi_D\le_{\mathrm{coeff}}\prod_{J\in\pi}\chi_{D[J]},
\]
with equality precisely when no edge of $\Gamma_D$ joins
distinct blocks, or equivalently when each block is a union
of connected components. If such an edge exists, at least one
coefficient is strictly smaller.
\end{theorem}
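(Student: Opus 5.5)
The plan has three steps: prove the inequality, prove equality when no edge crosses blocks, and produce a strictly smaller coefficient when some edge does.

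\textbf{Inequality.} I would work in the polynomial ring $\CC[y_{ij}]$ and take products of subspaces. Each product of allowed minors for $D$ factors blockwise, so $M_D$ equals the span of the products $M_{D[J_1]}\cdots M_{D[J_k]}$. Multiplication gives a surjective, weight-preserving linear map
\[
 \bigotimes_{J\in\pi}M_{D[J]}\longrightarrow M_D .
\]
The tensor product has character $\prod_J\chi_{D[J]}$, so surjectivity yields the coefficientwise inequality. Equality holds exactly when this map is injective in every weight.

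\textbf{Equality when no edge crosses blocks.} I would argue by induction on the number of blocks, so it suffices to treat two blocks $D[J]$ and $D[K]$ with no interaction between any column of $J$ and any column of $K$. The goal is injectivity of $M_{D[J]}\otimes M_{D[K]}\to M_{D[J\cup K]}$.

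The model is the two-column criterion \eqref{eq:twocol}, which gives exactly this for single columns. I would try to reuse the mechanism behind it. The non-interaction hypothesis should let each factor be reconstructed from the product by a variable specialization, or by a leading-term order on the $y_{ij}$ that separates the two blocks.

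Concretely, take a row $i$ that is vacant in a column $A$ and lies above an occupied row. Non-interaction says that every occupied row of $A$ below $i$ is vacant in every column of the other block, or else row $i$ is occupied there. So for each gap row of a column, the other block's columns either contain that row or avoid the boxes of $A$ lying below it.

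I would use this to define, for each column, a set of variables $y_{c,a}$ (with $a$ in the column) that appear only in minors from its own block. An elimination-type term order in which block-$J$ variables dominate should then make leading monomials of products factor uniquely. That gives a basis of $M_{D[J\cup K]}$ indexed by pairs of bases, hence injectivity.

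An alternative I would keep in reserve is a character-level route. One could show that $\chi$ of a diagram with no cross interactions equals a Demazure-type operator expression that factors. However, \eqref{eq:twocol} is only stated for two columns, so I expect the direct linear-algebra argument is needed. This step is the main obstacle, because blocks may share rows and even identical columns. The example $(\{2\},\{2\})$ shows that identical columns do interact, since row $1$ is vacant and row $2$ is occupied in both. So shared columns within different blocks are excluded, but heavy row overlaps remain, and the term order must be chosen carefully.

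\textbf{Strictness when an edge crosses.} Suppose columns $A\in J$ and $B\in K$ interact, with witness rows $i<j$. I would exhibit a nontrivial kernel element of the multiplication map.

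In $A$ and in $B$, moving the box in row $j$ up to row $i$ gives allowed minors. Pair these moves across the two columns. Move the box of $A$ to row $i$ and leave $B$ in place, or vice versa; both choices have the same weight. Taking all other columns principal, the two resulting tensors are linearly independent in the tensor product. The $y_{ij}$-linear parts of the corresponding products, however, can be shown to coincide, mirroring the example $y_{12}y_{22}$. The subtlety is to verify that the images are in fact linearly dependent, not merely that they share leading terms.

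Here I would instead reduce to \eqref{eq:twocol} by a degree argument. Restrict to the weight space where all columns other than $A$ and $B$ sit at their principal rows; this can be isolated by extremal weight considerations in rows outside the witness. That weight space should be identified with the corresponding one for $(A,B)$, where \eqref{eq:twocol} gives strict inequality.
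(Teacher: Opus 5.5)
\textbf{The equality direction is not proved.} Your inequality step is correct and agrees with the paper. The gap is the equality direction, which is the substance of the theorem, and the mechanism you sketch for it does not exist in general.

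\begin{itemize}
\item Blocks that share a row share the corresponding column of $Y$, so ``block-$J$ variables'' are not well defined.
\item Take the non-interacting columns $A=\{1,3\}$ and $B=\{2,3\}$ of Example~\ref{ex:interpolation}. In a generic upper triangular $Y$, the allowed minors of $A$ are $y_{11}y_{23}$ and $y_{11}y_{33}$. Those of $B$ are $y_{12}y_{23}-y_{13}y_{22}$, $y_{12}y_{33}$ and $y_{22}y_{33}$.
\item The only variable private to $A$ is $y_{11}$, and it occurs in both minors of $A$, so it cannot separate them.
\item The monomial $y_{11}y_{12}y_{23}y_{33}$ occurs in two different products.
\end{itemize}
A term order might handle such cases. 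However, you give neither the order nor a proof that leading monomials of products determine the pair of factors, and that proof is the entire claim.

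\textbf{The missing idea: decouple the blocks by evaluation, not by monomials.}
\begin{enumerate}
\item Setting $y_{jj}=1$ is injective on $M_D$. Every monomial of $M_D$ has the fixed degree $d_j=\#\{u:j\in D_u\}$ in column $j$ of $Y$, so the exponent of $y_{jj}$ can be recovered.
\item Fix arbitrary upper unitriangular $U,V$. Build a unitriangular $W$ column by column, from $i=j-1$ up to $i=1$. Impose $(U^{-1}W)_{ij}=0$ when some column of $P$ contains $j$ but not $i$, and $(V^{-1}W)_{ij}=0$ when some column of $Q$ does. Each condition is solvable because $W_{ij}$ enters with coefficient $1$. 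The two conditions never fall on the same entry precisely because no edge crosses.
\item With $H=U^{-1}W$, one gets $W_A^C=U_A^CH_A^A$ with $H_A^A$ unitriangular. Hence every $P$-minor has the same value at $W$ as at $U$, and every $Q$-minor the same value at $W$ as at $V$.
\item A relation $\sum c_{ij}f_ig_j=0$ therefore gives $\sum c_{ij}f_i(U)g_j(V)=0$ with $U,V$ independent. This forces all $c_{ij}=0$.
\end{enumerate}

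\textbf{Your strictness step also needs repair, though the fix is easy.} The weight space in which the remaining columns sit at their principal rows can be neither isolated nor identified with one for $(A,B)$. Take $D=(\{3\},\{3\},\{2\})$ with blocks $\{1,3\}$ and $\{2\}$, so the crossing edge joins the two copies of $\{3\}$. The weight $x_1x_2x_3$ is also reached with the third column moved to row $1$. Moreover $[x_1x_2x_3]\chi_D=2$, whereas $[x_1x_3]\chi_{(\{3\},\{3\})}=1$.

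No identification is needed. Take the nonzero kernel element supplied by \eqref{eq:twocol}, and multiply its two tensor factors by the products of the principal minors of the other columns in the respective blocks. The resulting map into $M_{D[J]}\otimes M_{D[K]}$ is injective, and its image lies in the kernel of multiplication. This is the paper's argument, phrased there via extended bases.
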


We begin with two blocks. Even if their minors share matrix
entries, the evaluations of the two blocks can be prescribed
independently when there is no edge between them. It suffices
to work with upper unitriangular matrices.

\begin{lemma}\label{lem:restriction}
For every fixed diagram $D$, setting $y_{jj}=1$ for all $j$
defines an injective linear map on $M_D$.
\end{lemma}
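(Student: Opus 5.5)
We need to show that the substitution $y_{jj}=1$ for all $j$ is injective on $M_D$. The plan is to use the row grading. Every element of $M_D$ is a sum of its weight-homogeneous parts, and the specialization of each part is still a polynomial in the variables $y_{ij}$ with $i<j$. The first step is therefore to show that the substitution loses no information about the weight. Concretely, we recover the full $x$-weight of a monomial of $M_D$ from its off-diagonal part alone.

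Take a monomial $\prod y_{ij}^{m_{ij}}$ appearing in some product of minors $\prod_j\det(Y_{D_j}^{E_j})$. Each minor $\det(Y_A^C)$ is a sum of products containing exactly one entry from each matrix column $a\in A$. Hence, in any monomial of $M_D$, the total degree in matrix column $a$ equals the number $c_a$ of diagram columns containing row $a$. This number depends only on $D$.

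So the diagonal exponent satisfies
\[
 m_{aa}=c_a-\sum_{i<a}m_{ia},
\]
and it is determined by the off-diagonal exponents. The map sending a monomial to its specialization is therefore injective on the set of all monomials that can occur in $M_D$. Distinct monomials remain distinct after setting $y_{jj}=1$, and no cancellation can occur between them.

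It follows that a linear combination of such monomials specializes to zero only if all its coefficients vanish. Injectivity on $M_D$ follows at once, and the restricted map is even a bijection onto its image that preserves the monomial expansion. In particular, the weight $x^\beta$ of each specialized monomial is still readable from its remaining exponents, via $\beta_i=\sum_j m_{ij}$ with $m_{ii}$ restored. This is what the block argument needs later.

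The only step requiring care is the column-degree count. I expect it to be routine once one notes that $Y$ is upper triangular. Vanishing entries merely remove monomials, so they do not affect the fact that each surviving monomial has exactly one factor from each column of the minor.
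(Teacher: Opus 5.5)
Your proposal is correct and is essentially the paper's argument. In both, every monomial in $M_D$ has fixed total degree $d_j=\#\{u:j\in D_u\}$ (your $c_a$) in matrix column $j$, so $\nu_{jj}=d_j-\sum_{i<j}\nu_{ij}$ is recovered and distinct monomials stay distinct after setting $y_{jj}=1$; the weight-readability remark is extra and not needed for the lemma.
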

\begin{proof}
Put
\[
 d_j=\#\{u:j\in D_u\}.
\]
Every monomial $y^\nu$ occurring in $M_D$ has total degree
$d_j$ in the variables $y_{1j},\ldots,y_{jj}$. After setting
$y_{jj}=1$, the omitted exponent is still determined by
\[
 \nu_{jj}=d_j-\sum_{i<j}\nu_{ij}.
\]
Thus two distinct monomials cannot become the same monomial
after the substitution. Hence a nonzero element of $M_D$
remains nonzero after this substitution, proving injectivity.
\end{proof}

\begin{lemma}\label{lem:interpolation}
Let $P$ and $Q$ be two blocks of columns with no interaction
edge between them. For any upper unitriangular matrices $U,V$
there is an upper unitriangular matrix $W$ satisfying
\[
 f(W)=f(U)\quad(f\in M_P),\qquad
 g(W)=g(V)\quad(g\in M_Q).
\]
\end{lemma}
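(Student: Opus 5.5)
The plan is to reduce the evaluation of $M_P$ to a statement about column spans of the matrix, and then build $W$ one matrix column at a time. For a diagram column $A$ and an upper unitriangular $U$, write $U_A$ for the $N\times|A|$ submatrix on the columns indexed by $A$. Every element of $M_P$ is a linear combination of products $\prod_{A}\det(Y_A^{C_A})$ over the columns $A$ of $P$. So it suffices to arrange that $\det(W_A^C)=\det(U_A^C)$ for every column $A$ of $P$ and every row set $C$, and likewise with $V$ for the columns of $Q$. This holds as soon as
\[
 W_A=U_A\,g_A
\]
for some upper unitriangular $|A|\times|A|$ matrix $g_A$. Indeed, right multiplication by a determinant-one matrix leaves all maximal minors of an $N\times|A|$ matrix unchanged, by Cauchy--Binet. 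Moreover, $W_A=U_Ag_A$ with $g_A$ unitriangular is equivalent to requiring, for each $j\in A$, that $W_j-U_j$ lie in the span of the columns $W_i$ with $i\in A$ and $i<j$. Here $W_j$ and $U_j$ denote the $j$th matrix columns.

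I will construct the columns $W_1,\dots,W_N$ inductively in the order $j=1,2,\dots,N$, keeping $W$ upper unitriangular. Fix $j$ and suppose $W_1,\dots,W_{j-1}$ are already chosen. Let
\[
 K_P=\bigcap_{A\in P,\ j\in A}\bigl(A\cap[j-1]\bigr),\qquad K_Q=\bigcap_{B\in Q,\ j\in B}\bigl(B\cap[j-1]\bigr),
\]
where an empty intersection is read as $[j-1]$. The requirement for step $j$ is
\[
 W_j-U_j\in\operatorname{span}\{W_i:i\in K_P\},\qquad W_j-V_j\in\operatorname{span}\{W_i:i\in K_Q\}.
\]
Meeting this for every $j$ gives $W_A=U_Ag_A$ for all $A\in P$ and $W_B=V_Bg_B$ for all $B\in Q$.

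The key point, and the only place the hypothesis enters, is the claim $K_P\cup K_Q=[j-1]$. Suppose some $i<j$ lies in neither set. Then there are columns $A\in P$ and $B\in Q$ with $j\in A\cap B$ and $i\notin A\cup B$. The pair $i<j$ is then a witness for an interaction edge between $A$ and $B$, which is excluded. If $j$ lies in no column of $P$ or of $Q$, the corresponding set is all of $[j-1]$ and the claim is trivial.

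Given the claim, the spans of $\{W_i:i\in K_P\}$ and $\{W_i:i\in K_Q\}$ together span all of $W_1,\dots,W_{j-1}$. Since $W$ is unitriangular so far, these columns span $\langle e_1,\dots,e_{j-1}\rangle$. The vector $V_j-U_j$ lies in this space, because $U$ and $V$ both have $1$ in position $j$ and zeros below. Hence we can write
\[
 V_j-U_j=p-q,\qquad p\in\operatorname{span}\{W_i:i\in K_P\},\quad q\in\operatorname{span}\{W_i:i\in K_Q\}.
\]
Setting $W_j=U_j+p=V_j+q$ satisfies both requirements. Also, $W_j$ has $1$ in position $j$ and zeros below, so $W$ remains upper unitriangular and the induction continues.

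I expect the main obstacle to be the reduction in the first paragraph rather than the combinatorics. One must check that agreement of all allowed minors column by column really yields $f(W)=f(U)$ for every $f\in M_P$; this is immediate since $M_P$ is spanned by products of such minors. One must also check that the triangular change of basis preserves each minor exactly, not merely up to scalar, which is why $g_A$ must be unitriangular. Once this is set up, the witness condition is exactly what makes the two sets of constraints cover all earlier columns.
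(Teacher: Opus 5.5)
Your proof is correct and follows essentially the same route as the paper. Both build $W$ column by column so that $W_A=U_Ag_A$ with $g_A$ upper unitriangular for every column $A$ of $P$ (and likewise with $V$ for $Q$), and both use the absence of a witness $i<j$ to show that the $P$- and $Q$-constraints on column $j$ never collide; the only difference is bookkeeping, since the paper imposes $(U^{-1}W)_{ij}=0$ or $(V^{-1}W)_{ij}=0$ entry by entry (so $g_A=H_A^A$ with $H=U^{-1}W$), whereas you phrase the constraints as spans of previously built columns $W_i$, $i\in K_P$ or $i\in K_Q$, and split $V_j-U_j$ using $K_P\cup K_Q=[j-1]$.
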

\begin{proof}
Set $W_{jj}=1$ and $W_{ij}=0$ for $i>j$.
For each $j$, determine the entries above the diagonal
in column $j$ of $W$ recursively, in the order
$i=j-1,j-2,\ldots,1$.
We choose each of these entries according to the following
rules. If a column of $P$
contains $j$ but not $i<j$, impose
\[
 (U^{-1}W)_{ij}=0.
\]
If the corresponding condition comes from $Q$, require
$(V^{-1}W)_{ij}=0$ instead. Both conditions could apply only if
columns of $P$ and $Q$ contained $j$ but omitted $i$, which
would give an interaction edge. If neither condition applies,
take $W_{ij}=0$. Matrix multiplication gives
\[
 (U^{-1}W)_{ij}
 =W_{ij}+(U^{-1})_{ij}
  +\sum_{i<\ell<j}(U^{-1})_{i\ell}W_{\ell j}.
\]
The coefficient of $W_{ij}$ is $1$, and the other entries of $W$
have already been chosen farther down column $j$. The same
holds with $V$ in place of $U$, so the construction is well
defined.

Now fix a column $A$ in $P$ and write $H=U^{-1}W$.
For $j\in A$ and $i\notin A$, we have $H_{ij}=0$:
by triangularity if $i>j$, and by construction if $i<j$.
The matrix $H_A^A$ is upper unitriangular. Hence, for every
$C\preceq A$,
\[
 \begin{aligned}
 W_A^C&=U_A^C H_A^A,\\
 \det(W_A^C)
   &=\det(U_A^C)\det(H_A^A)
    =\det(U_A^C).
 \end{aligned}
\]
Choose one allowed minor for each column of $P$.
Multiplying the corresponding equalities shows that the
resulting product has the same value at $W$ and $U$.
These products span $M_P$, so $f(W)=f(U)$ for every
$f\in M_P$. The same argument gives $g(W)=g(V)$ for
every $g\in M_Q$.
\end{proof}

\begin{example}\label{ex:interpolation}
Take $A=\{1,3\}$ and $B=\{2,3\}$. They both contain row $3$,
but there is no interaction edge between them. For
\[
 \begin{pmatrix}1&a&c\\0&1&b\\0&0&1\end{pmatrix},
\]
the allowed minors are $b,1$ for $A$ and $ab-c,a,1$
for $B$. The two sets of minors share the entry $b$. Nevertheless,
we can prescribe $b=\mu$ for $A$ and $a=\lambda$,
$ab-c=\rho$ for $B$ by taking $c=\lambda\mu-\rho$.
For example, $\mu=2$, $\lambda=3$, and $\rho=5$ give
$c=1$.
\end{example}

\begin{proof}[Proof of Theorem~\ref{thm:block}]
The claim is immediate for a single block. For two blocks $P,Q$,
choose homogeneous bases $f_1,\ldots,f_r$ of $M_P$ and
$g_1,\ldots,g_s$ of $M_Q$. Their pairwise products span
$M_D$. In each row degree, the coefficient of $\chi_P\chi_Q$
counts the corresponding pairs $(i,j)$, whereas the
coefficient of $\chi_D$ is the dimension of the span of
their products.
It follows that
\begin{equation}\label{eq:two-block-inequality}
 \chi_D\le_{\mathrm{coeff}}\chi_P\chi_Q.
\end{equation}
Equality holds if and only if the products $f_i g_j$
are linearly independent.

Suppose there is no edge from $P$ to $Q$, and consider
a linear relation
\[
 \sum_{i=1}^r\sum_{j=1}^s c_{ij}f_i(W)g_j(W)=0
\]
holding identically for upper triangular matrices $W$.
For arbitrary upper unitriangular matrices $U,V$, apply
this identity to a matrix $W$ supplied by
Lemma~\ref{lem:interpolation}, with
$f_i(W)=f_i(U)$ and $g_j(W)=g_j(V)$ for all $i,j$.
This gives
\[
 \sum_{i=1}^r\sum_{j=1}^s c_{ij}f_i(U)g_j(V)=0.
\]
By Lemma~\ref{lem:restriction}, both bases remain linearly
independent on upper unitriangular matrices. Fixing $V$ and
varying $U$ therefore gives $\sum_{j=1}^s c_{ij}g_j(V)=0$
for every $i$. The independence of the $g_j$, as $V$ varies,
then gives $c_{ij}=0$ for all $i,j$.
The products $f_i g_j$ are therefore linearly independent,
so equality holds in~\eqref{eq:two-block-inequality}.

Conversely, suppose that a column $A$ of $P$ interacts
with a column $B$ of $Q$. By~\eqref{eq:twocol}, there is
a nontrivial homogeneous linear relation among products
of an allowed minor of $A$ and an allowed minor of $B$.

For every indexed column other than the chosen columns
$A$ and $B$, fix its principal minor. Multiply each allowed
minor of $A$ by the fixed minors from $P$, and each allowed
minor of $B$ by those from $Q$. The resulting families lie
in $M_P$ and $M_Q$, respectively. They consist of homogeneous
polynomials and are linearly independent: the allowed minors
of each column are independent, and multiplication by a fixed
nonzero polynomial preserves this property. Extend these
families to homogeneous bases of $M_P$ and $M_Q$.

Multiplying the original relation by all the fixed minors
gives a nontrivial relation among products indexed by distinct
pairs of basis elements. Hence equality
in~\eqref{eq:two-block-inequality} is impossible, and the
inequality is strict in at least one row degree.

For more than two blocks, iterate this result.
Without crossing edges every split gives equality. Otherwise,
let $P$ consist of the columns in a block of $\pi$ containing
one endpoint of a crossing edge, and let $Q$ consist of the
columns in the other blocks.
The first split is strict. Applying the same inequality
within $Q$ gives
\[
 \chi_D
 <_{\mathrm{coeff}}\chi_P\chi_Q
 \le_{\mathrm{coeff}}
 \prod_{J\in\pi}\chi_{D[J]}.
\]
Here $<_{\mathrm{coeff}}$ means coefficientwise inequality
with strict inequality in at least one coefficient.
The description in terms of connected components follows.
\end{proof}

\subsection{The finest factorization}\label{sec:finest}
Recall that a factorization is finer than another when the
latter is obtained by grouping factors of the former.

\begin{corollary}\label{thm:canonical}
Let $K_1,\ldots,K_s$ be the connected components of $\Gamma_D$.
By Proposition~\ref{prop:recovery} and Theorem~\ref{thm:block},
the factorizations of $\chi_D$ into nonconstant diagram
characters are precisely those obtained by grouping these
components. Hence
\[
 \chi_D=\prod_{t=1}^s\chi_{D[K_t]}
\]
is the unique finest such factorization, up to order.
\end{corollary}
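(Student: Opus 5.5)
The plan is to prove the two directions separately: first that grouping components yields factorizations, then that every factorization into nonconstant diagram characters arises this way. Finest-ness and uniqueness follow from these.

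For the first direction, let $\pi$ be any partition of the component set $\{K_1,\ldots,K_s\}$. Taking unions gives a partition of the nonempty column indices. Each block of it is a union of connected components, so no edge of $\Gamma_D$ joins distinct blocks. Theorem~\ref{thm:block} then gives $\chi_D=\prod_{J}\chi_{D[J]}$. Each factor is nonconstant, since a block contains a nonempty column and $F_A$ has positive degree. In particular the partition into singletons gives $\chi_D=\prod_t\chi_{D[K_t]}$.

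For the converse, suppose $\chi_D=\chi_{E^{(1)}}\cdots\chi_{E^{(k)}}$ with each factor nonconstant. Place everything on a common $[N]$. By Proposition~\ref{prop:recovery}, the nonempty columns of the $E^{(t)}$, taken together with multiplicity, are exactly the nonempty columns of $D$. Fix a bijection between these two multisets of columns. It transports the decomposition into a partition $\pi=\{J_1,\ldots,J_k\}$ of the nonempty column indices of $D$, with $D[J_t]$ equal to $E^{(t)}$ up to column order and empty columns. Hence $\chi_{D[J_t]}=\chi_{E^{(t)}}$, and every $J_t$ is nonempty because $\chi_{E^{(t)}}$ is nonconstant. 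We then have $\chi_D=\prod_t\chi_{D[J_t]}$.

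The equality case of Theorem~\ref{thm:block} now forces each $J_t$ to be a union of components, so the given factorization is a grouping of the component factors. One subtlety is the choice of bijection when $D$ has repeated columns. The conclusion holds for every choice, because the equality case applies to whatever partition results. Alternatively, equal columns can be swapped between blocks without changing any factor.

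For uniqueness, let $\chi_D=\prod_{t=1}^{s'}G_t$ be a factorization into nonconstant diagram characters that is finest. By the converse, it is a grouping of the $s$ component factors, so $s'\le s$. If some $G_t$ grouped two or more components, then splitting it would give a strictly finer factorization of the allowed kind, a contradiction. So the factors coincide with the $\chi_{D[K_t]}$ up to order. Such a factorization exists because the component factorization is finer than every factorization in this class.

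The step I expect to need the most care is the transfer from an abstract factorization to a column partition of $D$. This relies on Proposition~\ref{prop:recovery} matching columns with multiplicity, and on the harmless normalizations: permuting columns, dropping empty columns, and enlarging the grid.
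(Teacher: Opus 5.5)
Your proposal is correct and follows the paper's route. Proposition~\ref{prop:recovery} turns any factorization into nonconstant diagram characters into a partition of the nonempty columns of $D$, and the equality case of Theorem~\ref{thm:block} then forces each block to be a union of connected components, which yields both the grouping description and the unique finest factorization; your remarks on repeated columns and on choosing the bijection only spell out details that the paper leaves implicit.
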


\begin{example}\label{ex:blocks}
For the columns $A,B$ in Example~\ref{ex:interpolation},
take $D=(A,A,B,B)$, as shown in Figure~\ref{fig:blocks}. The two copies
of $A$ interact, as do the two copies of $B$, but no $A$
interacts with a $B$. The dashed rounded boxes mark the two
components, with column indices $\{1,2\}$ and $\{3,4\}$.
The component factorization is
\[
 \chi_D=\chi_{(A,A)}\chi_{(B,B)},
\]
where
\[
 \chi_{(A,A)}=x_1^2(x_2^2+x_2x_3+x_3^2),
 \qquad
 \chi_{(B,B)}=s_{(2,2)}(x_1,x_2,x_3).
\]
Here $s_{(2,2)}$ is the Schur polynomial. Since each
component contains an edge, neither factor splits further
into the characters of its two columns.
\end{example}

\begin{figure}[htbp]
\centering
\begin{tikzpicture}[cf diagram]
  \foreach \r in {1,2,3}
    \node[anchor=east] at (-0.95,\r-1) {$\r$};
  \foreach \c in {1,2,3,4}
    \node at (\c-1,-1.05) {$\c$};
  \fill[black!15] (-0.5,-0.5) rectangle (1.5,0.5);
  \fill[black!15] (1.5,0.5) rectangle (3.5,1.5);
  \fill[black!15] (-0.5,1.5) rectangle (3.5,2.5);
  \draw (-0.5,-0.5) rectangle (1.5,0.5);
  \draw (0.5,-0.5)--(0.5,0.5);
  \draw (1.5,0.5) rectangle (3.5,1.5);
  \draw (2.5,0.5)--(2.5,1.5);
  \draw (-0.5,1.5) rectangle (3.5,2.5);
  \foreach \x in {0.5,1.5,2.5}
    \draw (\x,1.5)--(\x,2.5);

  \node (d1) at (7,0.2) {$D_1$};
  \node (d2) at (9.8,0.2) {$D_2$};
  \node (d3) at (12.6,0.2) {$D_3$};
  \node (d4) at (15.4,0.2) {$D_4$};
  \draw[cf edge] (d1.east)--(d2.west);
  \draw[cf edge] (d3.east)--(d4.west);
  \node (blockrows1) at (7,1.4) {$\{1,3\}$};
  \node (blockrows2) at (9.8,1.4) {$\{1,3\}$};
  \node (blockrows3) at (12.6,1.4) {$\{2,3\}$};
  \node (blockrows4) at (15.4,1.4) {$\{2,3\}$};
  \begin{scope}[on background layer]
    \node[cf component,inner xsep=3pt,fit=(d1)(d2)(blockrows1)(blockrows2)] {};
    \node[cf component,inner xsep=3pt,fit=(d3)(d4)(blockrows3)(blockrows4)] {};
  \end{scope}
  \node at (1.5,3.7) {$D$};
  \node at (11.2,3.7) {$\Gamma_D$};
\end{tikzpicture}
\caption{The diagram $D=(A,A,B,B)$ and its interaction graph.}
\label{fig:blocks}
\end{figure}
\FloatBarrier

\section{Products of key polynomials}\label{sec:keys}
We apply the criterion for factorization by column blocks to
key products. We first characterize the components whose
characters are single key polynomials, and then translate
the component condition for Rothe diagrams into pattern
avoidance.

\subsection{The column condition}
We begin with the criterion for a single key polynomial.
\begin{remark}\label{rem:singlekey}
A diagram character $\chi_E$ is a key polynomial if and only
if its nonempty columns form an inclusion chain. Indeed, the
columns of a chain can be arranged as a skyline, and the
converse follows from Proposition~\ref{prop:recovery}. The
indexing composition $\alpha$ is given by the row counts of
$E$, so $\chi_E=\key_\alpha$.
\end{remark}

\begin{example}\label{ex:splitkey}
A key polynomial can factor further. Let $D=\Sky(2,0,1)$.
Its columns are $D_1=\{1,3\}$ and $D_2=\{1\}$.
They satisfy $D_1\supsetneq D_2$ but do not interact.
In Figure~\ref{fig:splitkey}, the two dashed rounded boxes
mark the singleton components, and the inclusion sign
compares the two row sets. Hence
\[
 \key_{(2,0,1)}=x_1^2(x_2+x_3)
 =\key_{(1)}\key_{(1,0,1)}.
\]
\end{example}

\begin{figure}[htbp]
\centering
\begin{tikzpicture}[cf diagram]
  \foreach \r in {1,...,3}
    \node[anchor=east] at (-0.95,\r-1) {$\r$};
  \node at (0,-1.05) {$1$};
  \node at (1,-1.05) {$2$};
  \fill[black!15] (-0.5,-0.5) rectangle (1.5,0.5);
  \fill[black!15] (-0.5,1.5) rectangle (0.5,2.5);
  \draw (-0.5,-0.5) rectangle (1.5,0.5);
  \draw (0.5,-0.5)--(0.5,0.5);
  \draw (-0.5,1.5) rectangle (0.5,2.5);
  \node (e1) at (5.5,0.2) {$D_1$};
  \node (e2) at (9.3,0.2) {$D_2$};
  \node (splitrows1) at (5.5,1.4) {$\{1,3\}$};
  \node (splitrows2) at (9.3,1.4) {$\{1\}$};
  \begin{scope}[on background layer]
    \node[cf component,fit=(e1)(splitrows1)] (splitcomponent1) {};
    \node[cf component,fit=(e2)(splitrows2)] (splitcomponent2) {};
  \end{scope}
  \node[anchor=base] at
    ($(splitrows1.base east)!0.5!(splitrows2.base west)$) {$\supsetneq$};
  \node at (0.5,3.8) {$\Sky(2,0,1)$};
  \node at (7.4,3.8) {$\Gamma_{\Sky(2,0,1)}$};
\end{tikzpicture}
\caption{A skyline diagram with two isolated columns.}
\label{fig:splitkey}
\end{figure}

\begin{theorem}\label{thm:keyproduct}
The character $\chi_D$ is a key product if and only if the columns
in every connected component of $\Gamma_D$ form an inclusion chain.
When this holds, let $K_1,\ldots,K_s$ be these components,
and let $\alpha_i^{(t)}$ count
the boxes in row $i$ of $D[K_t]$. Then
\begin{equation*}
 \chi_D=\prod_{t=1}^s\key_{\alpha^{(t)}}.
\end{equation*}
This is the unique finest key factorization, up to order;
indexing compositions are understood up to trailing zero parts.
\end{theorem}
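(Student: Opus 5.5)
The proof combines Corollary~\ref{thm:canonical} with Remark~\ref{rem:singlekey}. The main point is that every key is itself a diagram character, the character of a skyline diagram. So any key factorization is a factorization into diagram characters, and Proposition~\ref{prop:recovery} controls it.

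\emph{Sufficiency.} Suppose the columns of each component $K_t$ form an inclusion chain. Theorem~\ref{thm:block}, applied to the partition into components, gives $\chi_D=\prod_t\chi_{D[K_t]}$. By Remark~\ref{rem:singlekey}, each $\chi_{D[K_t]}$ equals $\key_{\alpha^{(t)}}$, where $\alpha^{(t)}$ is the vector of row counts of $D[K_t]$. Row counts are unchanged when the chain is rearranged into skyline order. Every component is nonempty, so each factor is nonconstant, and $\chi_D$ is a key product.

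\emph{Necessity.} Suppose $\chi_D=\prod_{u}\key_{\beta^{(u)}}$ with every factor nonconstant. Write $\key_{\beta^{(u)}}=\chi_{\Sky(\beta^{(u)})}$ on a common $[N]$. By Corollary~\ref{thm:canonical}, the factorization arises from grouping components. Concretely, Proposition~\ref{prop:recovery} first matches the nonempty columns of the skylines, as a multiset, with those of $D$. The block structure of Theorem~\ref{thm:block} then applies to the resulting column partition.

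The delicate point is that Proposition~\ref{prop:recovery} only matches multisets. We therefore choose a bijection between the columns of the skylines and the columns of $D$ that respects equality of row sets. This yields a partition $\pi$ of the indices of $D$ with $\chi_{D[J_u]}=\key_{\beta^{(u)}}$, since the characters depend only on the column multisets. Hence $\chi_D=\prod_{J\in\pi}\chi_{D[J]}$. The equality clause of Theorem~\ref{thm:block} then says each block is a union of components.

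Each block $J_u$ has the same column multiset as a skyline, so its columns form an inclusion chain. Any sub-collection of a chain is again a chain. Therefore every component contained in $J_u$ has chain columns, which proves necessity.

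\emph{Uniqueness and finest.} The argument above shows that every key factorization comes from grouping the components. The component factorization itself consists of keys, so it is the finest key factorization. If two key factorizations both refine all others, each is obtained by grouping the other. Both then come from the component partition, and their factors agree up to order.

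A key determines its composition up to trailing zeros. This follows from Proposition~\ref{prop:recovery}: the chain of columns is recovered, hence so are the row counts. Consequently the $\alpha^{(t)}$ are determined up to trailing zero parts.

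The main obstacle I expect is the bookkeeping in the necessity step, when several skyline factors contain equal columns. Then the column-to-block assignment is not canonical. The fix is the observation that swapping equal columns between blocks changes no character and no interaction pattern. Any matching compatible with multiplicities therefore works, and Theorem~\ref{thm:block} applies to it.
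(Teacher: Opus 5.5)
Your proof is correct and follows essentially the paper's route. Sufficiency is Theorem~\ref{thm:block} plus Remark~\ref{rem:singlekey}; for necessity and uniqueness, keys are skyline characters, so Proposition~\ref{prop:recovery} converts any key factorization into a column partition. The equality clause of Theorem~\ref{thm:block} (packaged in the paper as Corollary~\ref{thm:canonical}) then forces each block to be a union of components, and the chain property passes to sub-collections. Your extra bookkeeping about matching equal columns only spells out what the paper leaves implicit; in fact any multiplicity-respecting matching works, since Theorem~\ref{thm:block} applies to every partition.
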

\begin{proof}
A key factor is a skyline character, so its columns form
an inclusion chain by Remark~\ref{rem:singlekey}.
Proposition~\ref{prop:recovery} and
Corollary~\ref{thm:canonical} show that a connected component
cannot be split among different factors. Thus every component must be a
chain. Conversely, when the components are chains, arrange
the columns of each as a skyline. Their characters are the
claimed keys, and the component factorization is the unique
finest one.
\end{proof}

\subsection{The pattern condition}
Even if every edge joins comparable columns, the columns in a
connected component need not form an inclusion chain. Consider $w=142635$, whose nonempty Rothe columns are
\[
 D_2=\{2\},\qquad D_3=\{2,4\},\qquad D_5=\{4\}.
\]
Figure~\ref{fig:path} shows the path $D_2-D_3-D_5$.
The dashed rounded box encloses its single component.
Its neighboring columns satisfy
$D_2\subsetneq D_3\supsetneq D_5$, but the endpoints
are incomparable.

\begin{figure}[htbp]
\centering
\settowidth{\cfpathlabelwidth}{\small $D_2$}
\setlength{\cfpathstep}{\dimexpr(56.4pt+\cfpathlabelwidth+8pt)/2\relax}
\begin{tikzpicture}[cf diagram,x=16pt,y=-16pt,
  cf full grid/.style={line width=0.25pt,
    dash pattern=on 1.1pt off 1.1pt}]
  \fill[black!15] (0.5,0.5) rectangle (2.5,1.5);
  \fill[black!15] (1.5,2.5) rectangle (2.5,3.5);
  \fill[black!15] (3.5,2.5) rectangle (4.5,3.5);
  \foreach \k in {0,...,6} {
    \draw[cf full grid] (\k-0.5,-0.5)--(\k-0.5,5.5);
    \draw[cf full grid] (-0.5,\k-0.5)--(5.5,\k-0.5);
  }
  \foreach \r in {1,...,6}
    \node[anchor=east] at (-0.95,\r-1) {$\r$};
  \foreach \j in {1,...,6}
    \node at (\j-1,-1.05) {$D_{\j}$};
  \draw (0.5,0.5) rectangle (2.5,1.5);
  \draw (1.5,0.5)--(1.5,1.5);
  \draw (1.5,2.5) rectangle (2.5,3.5);
  \draw (3.5,2.5) rectangle (4.5,3.5);
  \node at (2.5,6.2) {$D(142635)$};
  \begin{scope}[xshift=144pt]
    \node (d2) at (0,1.6) {$D_2$};
    \node (d3) at ([xshift=\cfpathstep]d2.center) {$D_3$};
    \node (d5) at ([xshift=\cfpathstep]d3.center) {$D_5$};
    \draw[cf edge] (d2.east)--(d3.west);
    \draw[cf edge] (d3.east)--(d5.west);
    \node (pathrows2) at ([yshift=-20pt]d2.center) {$\{2\}$};
    \node (pathrows3) at ([yshift=-20pt]d3.center) {$\{2,4\}$};
    \node (pathrows5) at ([yshift=-20pt]d5.center) {$\{4\}$};
    \node[anchor=base] at
      ($(pathrows2.base east)!0.5!(pathrows3.base west)$) {$\subsetneq$};
    \node[anchor=base] at
      ($(pathrows3.base east)!0.5!(pathrows5.base west)$) {$\supsetneq$};
    \begin{scope}[on background layer]
      \node[cf component,fit=(d2)(d3)(d5)(pathrows2)(pathrows3)(pathrows5)] {};
    \end{scope}
    \node at (\cfpathstep,6.2) {$\Gamma_{D(142635)}$};
  \end{scope}
\end{tikzpicture}
\caption{The Rothe diagram $D(142635)$ and its interaction graph.}
\label{fig:path}
\end{figure}
\FloatBarrier

\begin{lemma}\label{lem:short}
Every component of $\Gamma_D$ whose columns do not form an inclusion chain contains either an edge between incomparable columns, or a path $A-B-C$ with incomparable endpoints and
\[
 B\subseteq A\cap C\qquad\text{or}\qquad B\supseteq A\cup C.
\]
\end{lemma}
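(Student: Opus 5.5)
We argue within a single component $K$ whose columns do not form an inclusion chain, and we assume $K$ contains no edge between incomparable columns. Every edge of $K$ then joins comparable columns, so each edge is oriented by strict or non-strict inclusion. Equal columns can be merged for the purposes of comparability. Since $K$ is not a chain, it contains two incomparable columns $X,Y$. Because $K$ is connected, some path $X=P_0-P_1-\cdots-P_k=Y$ joins them, with consecutive columns comparable. Among all incomparable pairs in $K$ and all paths joining them, we choose one minimizing the length $k$. Then $k\ge2$, since $k=1$ would be an edge between incomparable columns.

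The main step is to show that a minimal path has length exactly $2$ and the required shape. First, the inclusions along the path cannot be monotone, since a chain $P_0\subseteq P_1\subseteq\cdots\subseteq P_k$ or its reverse would make $X$ and $Y$ comparable. Hence some interior vertex $P_\ell$ is a local extremum: either $P_{\ell-1}\supseteq P_\ell\subseteq P_{\ell+1}$ or $P_{\ell-1}\subseteq P_\ell\supseteq P_{\ell+1}$. Consider the two neighbours $P_{\ell-1},P_{\ell+1}$.
\begin{itemize}
\item If the neighbours are incomparable, then $P_{\ell-1}-P_\ell-P_{\ell+1}$ is a path joining an incomparable pair. By minimality, $k=2$ and this is the whole path. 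Moreover $P_\ell\subseteq P_{\ell-1}\cap P_{\ell+1}$ or $P_\ell\supseteq P_{\ell-1}\cup P_{\ell+1}$, which is the desired configuration.
\item If the neighbours are comparable, say $P_{\ell-1}\subseteq P_{\ell+1}$, we want to shorten the path by deleting $P_\ell$.
\end{itemize}

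Deleting $P_\ell$ is the expected obstacle, because comparability of $P_{\ell-1}$ and $P_{\ell+1}$ does not automatically give an edge between them. We need an interaction edge, not just comparability. Here we use the witness structure. Suppose $P_\ell\subseteq P_{\ell-1}\subseteq P_{\ell+1}$, with $P_\ell$ the minimum. The edge $P_\ell-P_{\ell+1}$ has a witness $i<j$ with $i\notin P_\ell\cup P_{\ell+1}$ and $j\in P_\ell\cap P_{\ell+1}$. Then $j\in P_{\ell-1}$, and $i\notin P_{\ell-1}$ because $P_{\ell-1}\subseteq P_{\ell+1}$. So $i<j$ also witnesses an edge $P_{\ell-1}-P_{\ell+1}$.

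The maximum case goes the same way. Suppose $P_{\ell-1}\subseteq P_{\ell+1}\subseteq P_\ell$. The edge $P_{\ell-1}-P_\ell$ has a witness $i<j$ with $i\notin P_\ell$ and $j\in P_{\ell-1}$. Then $i\notin P_{\ell+1}$ and $j\in P_{\ell+1}$, giving the edge again. The general principle is to take the witness of the edge from the extremal vertex to whichever neighbour makes the inclusions transfer. We check all four sub-cases in this way; the sub-case with the other neighbour comparable in the opposite direction is symmetric.

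With the edge $P_{\ell-1}-P_{\ell+1}$ in hand, the path $P_0-\cdots-P_{\ell-1}-P_{\ell+1}-\cdots-P_k$ is shorter and still joins $X$ and $Y$, contradicting minimality. Hence the neighbours of any extremal vertex of a minimal path are incomparable, and the first bullet applies. This yields the path $A-B-C$ with incomparable endpoints and $B\subseteq A\cap C$ or $B\supseteq A\cup C$, completing the lemma.
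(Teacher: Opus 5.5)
Your proof is correct. It uses the same two ingredients as the paper: a shortest path between incomparable columns at minimum graph distance, and the transfer of a witness $i<j$ from one edge to another pair of columns once the relevant inclusions are known. You organize them differently, though.

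The paper uses minimality globally. Every pair of vertices on the shortest path other than the endpoints is comparable, so each internal column lies below both endpoints or above both. A witness argument shows that the two types cannot both occur. The largest internal column of the lower type, or the smallest of the upper type, then serves as $B$, and the witnesses of the first and last edges transfer to $AB$ and $BC$.

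You use only comparability of consecutive vertices, together with one local shortcut. At a turning point $P_\ell$, comparable neighbours would inherit a witness, so $P_\ell$ could be skipped, contradicting minimality. Hence the neighbours are incomparable and the path has length two.

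Your route avoids the two-type classification and the no-mixing step. The paper's route argues directly rather than by contradiction, and it never needs a turning point to exist. That existence step deserves one line in your write-up, since consecutive inclusions may be equalities. If some step satisfies only $P_t\subseteq P_{t+1}$ and a later one only $P_s\supseteq P_{s+1}$, then the last step before $s$ with $P_r\subseteq P_{r+1}$ gives a local maximum at $P_{r+1}$; the reverse order gives a local minimum.

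You can also drop the remark about merging equal columns. Your shortcut argument already handles distinct indexed columns with equal row sets.
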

\begin{proof}
Choose incomparable columns $A,C$ having minimum graph
distance in their component and take a shortest path between
them. Distance one gives the first alternative. Otherwise,
every other pair of vertices on the path has smaller graph
distance than $A,C$ and is therefore comparable by minimality.
In particular, the internal columns form a chain, and each
is comparable with both $A$ and $C$.
If an internal column $B$ satisfied $A\subseteq B\subseteq C$
or $C\subseteq B\subseteq A$, then $A,C$ would be comparable,
a contradiction. Thus every internal column is either
contained in both endpoints or contains both.

These two types cannot both occur. Indeed, consecutive
internal columns of different types would give
$L\subseteq A\cap C$ and $H\supseteq A\cup C$ with an edge
$LH$. A row pair witnessing this edge is vacant in $H$
and occupied in $L$, hence vacant and occupied, respectively,
in both $A$ and $C$. This would make $A,C$ adjacent.

If the internal columns are all contained in $A\cap C$,
choose the largest one, $B$. Since $B\subseteq A\cap C$,
the row pair witnessing the first edge of the path also
witnesses $AB$; the last edge similarly witnesses $BC$.
If all internal columns contain $A\cup C$, choose the
smallest one instead. The same reasoning, with inclusions
reversed, gives $AB$ and $BC$.
\end{proof}

A diagram is \emph{northwest} if boxes at the northeast
and southwest corners of a rectangle force a box at the
northwest corner. Equivalently, when $A$ lies to the left
of $C$, every row of $C\setminus A$ is below every row of
$A$.

\begin{lemma}\label{lem:nwshort}
Let $D$ be a northwest diagram. A component of $\Gamma_D$ fails to be an inclusion chain if and only if it contains either an edge between incomparable columns, or a path of length two $A-B-C$ with nonadjacent incomparable endpoints and
\[
 B\supseteq A\cup C.
\]
\end{lemma}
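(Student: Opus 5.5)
The reverse implication is immediate. In either configuration the component contains two incomparable columns, namely the endpoints of the edge or $A$ and $C$. Hence its columns do not form an inclusion chain. For the forward implication, I would start from Lemma~\ref{lem:short}. It supplies either an edge between incomparable columns, which is the first alternative, or a path $A-B-C$ with incomparable endpoints and $B\subseteq A\cap C$ or $B\supseteq A\cup C$. If $A$ and $C$ happen to be adjacent, we are again in the first alternative. So it remains to assume $A,C$ nonadjacent and to rule out the case $B\subseteq A\cap C$, using the northwest property.

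Suppose $B\subseteq A\cap C$, with $A,C$ incomparable and nonadjacent. Take a witness $i<j$ for the edge $AB$: row $i$ is vacant in $A$ and in $B$, and row $j$ is occupied in both. Since $B\subseteq C$, row $j$ lies in $A\cap C$. Nonadjacency of $A$ and $C$ says that no row vacant in both lies above a row occupied in both. Since $i\notin A$, this forces $i\in C$, so $i\in C\setminus A$ and $i<j\in A$. Symmetrically, take a witness $k<l$ for the edge $BC$: row $k$ is vacant in $B$ and $C$, and row $l\in B\cap C$. Then $l\in A\cap C$, and nonadjacency forces $k\in A$. Thus $k\in A\setminus C$ and $k<l\in C$.

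Now compare the positions of $A$ and $C$ in $D$. If $A$ lies to the left of $C$, the northwest condition says that every row of $C\setminus A$ lies below every row of $A$. This contradicts $i\in C\setminus A$ and $j\in A$ with $i<j$. If $C$ lies to the left of $A$, every row of $A\setminus C$ lies below every row of $C$. This contradicts $k\in A\setminus C$ and $l\in C$ with $k<l$. Hence only the alternative $B\supseteq A\cup C$ survives, with $A,C$ nonadjacent, which proves the lemma.

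I do not expect a serious obstacle. The only point needing care is to use nonadjacency of the endpoints to move the vacant witness row into the other endpoint. This step is what makes the northwest condition applicable, and it is why we first dispose of the adjacent case through the first alternative.
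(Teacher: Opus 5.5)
Your proposal is correct and is essentially the paper's proof: both reduce to Lemma~\ref{lem:short} and rule out $B\subseteq A\cap C$ by showing that the vacant row of a witness for $AB$ must lie in $C\setminus A$ above a row of $A$, which contradicts the northwest property. The paper assumes by symmetry that $A$ lies to the left of $C$ and uses only the $AB$ witness, so your second witness for $BC$ and your case split on the relative position of $A$ and $C$ are redundant but harmless.
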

\begin{proof}
By Lemma~\ref{lem:short}, it suffices to exclude
$B\subseteq A\cap C$ when the incomparable endpoints
are nonadjacent. Put $A$ to the left of $C$, and let
$i<j$ witness $AB$.
Then $j\in B\subseteq C$, whereas $i\notin A$. If $i$
were in $C$, it would belong to $C\setminus A$ above
the box at row $j$ of $A$, contrary to the northwest
property. Thus $i\notin C$ as well. The same pair
$i<j$ witnesses $AC$, so the endpoints cannot be
nonadjacent.
\end{proof}

\begin{remark}\label{rem:rothenw}
Rothe diagrams are northwest. For $j<k$, every row in
$D_k(w)\setminus D_j(w)$ lies below row $w^{-1}(j)$,
while every row of $D_j(w)$ lies above it. Consequently,
the components of $\Gamma_{D(w)}$ satisfy the
characterization in Lemma~\ref{lem:nwshort}.
\end{remark}

\begin{proof}[Proof of Theorem~\ref{thm:intro}]
The factorization formula and its uniqueness follow from
Theorem~\ref{thm:keyproduct}. It remains to prove the
avoidance criterion and minimality.

Suppose $\Sch_w$ is not a key product. By
Theorem~\ref{thm:keyproduct} and Remark~\ref{rem:rothenw},
there are incomparable Rothe columns $D_j,D_k$, $j<k$,
joined either by an edge or by a path of length two through
a column containing both. In each case, we find a subsequence
of $w$ whose pattern lies in $\calB$.

Choose entries $x,y$ with $w^{-1}(x)\in D_j\setminus D_k$
and $w^{-1}(y)\in D_k\setminus D_j$. Then $x>j$ and
$y>k$, with $x$ preceding $j$ and $y$ preceding $k$.
Since $y>k>j$ but $w^{-1}(y)\notin D_j$, $y$ follows $j$;
hence $x$ precedes $k$ and, since $w^{-1}(x)\notin D_k$,
satisfies $x<k$. Thus four selected entries occur as
\[
 x\,j\,y\,k,\qquad j<x<k<y.
\]

\smallskip
\noindent\textit{Case I: $D_j$ and $D_k$ are adjacent.}
Let the rows $w^{-1}(u)<w^{-1}(v)$ witness the edge.
Since $w^{-1}(v)\in D_j\cap D_k$, the entry $v$ is larger
than $k$ and precedes both $j$ and $k$. The entry $u$
precedes $v$, and hence $j$. Since $w^{-1}(u)\notin D_j$,
the definition of a Rothe column gives $u<j$.
Thus $u<j<k<v$. In particular,
\[
 u<j<x<k<\min(v,y).
\]
There are three positions for $x$ relative to $u,v$,
and two possible orders of the values $v,y$. These give
exactly six patterns:
\begin{center}
\begin{tabular}{@{}ccc@{}}
\toprule
Order in $w$ & $v<y$ & $y<v$\\
\midrule
$u\,x\,v\,j\,y\,k$ & $135264$ & $136254$\\
$u\,v\,x\,j\,y\,k$ & $153264$ & $163254$\\
$x\,u\,v\,j\,y\,k$ & $315264$ & $316254$\\
\bottomrule
\end{tabular}
\end{center}

\smallskip
\noindent\textit{Case II: $D_j-D_\ell-D_k$ is a path with
$D_j\cup D_k\subseteq D_\ell$ and nonadjacent endpoints.}
Because $w^{-1}(y)\in D_\ell$, the entry $y$ precedes
$\ell$ and is larger than it. Let the rows
$w^{-1}(u)<w^{-1}(v)$ witness the edge $D_j-D_\ell$.
The same argument as in Case~I shows that the entries $u,v$
occur in this order before both $j$ and $\ell$, with
$u<\min(j,\ell)<\max(j,\ell)<v$.
If $v>k$, the same witness rows would give an edge
$D_j-D_k$, contrary to the nonadjacency of the endpoints.
Also, $v\ne k$, since $v$ precedes $j$ whereas $k$
follows $j$. Hence $v<k$. We obtain
\[
 u<\min(j,\ell)<\max(j,\ell)<v<k<y.
\]
Since $j<v<k$ and $v$ precedes $j$,
$w^{-1}(v)\in D_j\setminus D_k$, so we may use $v$
in place of $x$.
The first four selected entries are $u,v,j,y$;
the positions of $\ell,k$ can be exchanged. Together
with the two possible orders of the values $j,\ell$,
this gives four patterns:
\begin{center}
\begin{tabular}{@{}ccc@{}}
\toprule
Order in $w$ & $j<\ell$ & $\ell<j$\\
\midrule
$u\,v\,j\,y\,\ell\,k$ & $142635$ & $143625$\\
$u\,v\,j\,y\,k\,\ell$ & $142653$ & $143652$\\
\bottomrule
\end{tabular}
\end{center}
Together, the two cases give the ten members of $\calB$.

Conversely, take any occurrence of a listed pattern and label
its selected entries according to the corresponding table.
Their order in $w$ and the relative order of their values give
the inequalities used above. In the first six cases,
$w^{-1}(x)\in D_j\setminus D_k$ and
$w^{-1}(y)\in D_k\setminus D_j$, while the rows
$w^{-1}(u)<w^{-1}(v)$ witness an edge between these columns.
In the last four, the rows $w^{-1}(v),w^{-1}(y)$ show that
the endpoint columns are incomparable, and the row pairs
$(w^{-1}(u),w^{-1}(v))$ and $(w^{-1}(u),w^{-1}(y))$
witness the two edges through $D_\ell$.
In both cases the indicated edges put two incomparable
columns in one component, so the chain condition fails.

Thus every failure of the chain condition contains a member
of $\calB$. Each member of $\calB$ itself fails the condition;
since they are distinct permutations of length six, none
contains another. This proves minimality.
\end{proof}

\section{Products of elementary symmetric polynomials}\label{sec:elementary}
The polynomial $e_b(x_1,\ldots,x_a)$ is the sum of the
squarefree monomials of degree $b$ in $x_1,\ldots,x_a$.
Since the factors have nonnegative coefficients, no variable
appearing in a factor can disappear from the product. We may
therefore work in a common finite alphabet.

\begin{lemma}\label{lem:interval}
For $1\le b\le a$,
\[
 F_{[a-b+1,a]}=e_b(x_1,\ldots,x_a).
\]
Conversely, if $F_A=e_b(x_1,\ldots,x_a)$ with $A\ne\varnothing$, then $A=[a-b+1,a]$.
\end{lemma}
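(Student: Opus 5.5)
Both directions come straight from the definition~\eqref{eq:FA}, $F_A=\sum_{C\preceq A}x^C$, together with an explicit description of the Gale-order ideal below an interval.

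\emph{Forward direction.} Put $A=[a-b+1,a]=\{a_1<\cdots<a_b\}$, so $a_i=a-b+i$. The plan is to show that the sets $C\preceq A$ are exactly the $b$-subsets of $[a]$.
\begin{itemize}
\item If $C\preceq A$, then $c_b\le a_b=a$, so $C\subseteq[a]$.
\item If $C=\{c_1<\cdots<c_b\}\subseteq[a]$, there are at most $b-i$ integers in $[a]$ strictly larger than $c_i$. Since these include $c_{i+1},\dots,c_b$, we get $c_i\le a-(b-i)=a_i$.
\end{itemize}
Hence $F_A$ is the sum of the squarefree monomials of degree $b$ in $x_1,\ldots,x_a$, which is $e_b(x_1,\ldots,x_a)$.

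\emph{Converse.} Suppose $F_A=e_b(x_1,\ldots,x_a)$ with $A$ nonempty.
\begin{itemize}
\item Comparing degrees gives $|A|=b$.
\item Since $C=A$ is allowed, $x^A$ occurs in $F_A$. So $x^A$ is a squarefree monomial in $x_1,\dots,x_a$, which gives $A\subseteq[a]$.
\item The monomial $x_{a-b+1}\cdots x_a$ occurs in $e_b(x_1,\ldots,x_a)$, so $[a-b+1,a]\preceq A$. That is, $a-b+i\le a_i$ for every $i$.
\item From $A\subseteq[a]$ we also have $a_i\le a-(b-i)$, by the counting argument used in the forward direction.
\end{itemize}
The two inequalities force $a_i=a-b+i$ for all $i$, so $A=[a-b+1,a]$.

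An alternative for the converse: the forward direction gives $F_{[a-b+1,a]}=F_A$, and Proposition~\ref{prop:recovery}, applied to one-column diagrams, then yields $A=[a-b+1,a]$. The direct argument above is self-contained, so I would present it and mention the proposition only as a remark. The only step needing any care is the counting inequality $c_i\le a-(b-i)$ for a $b$-subset of $[a]$.
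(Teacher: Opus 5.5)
Your proof is correct and follows the paper's route. The forward direction identifies the sets $C\preceq[a-b+1,a]$ with the $b$-subsets of $[a]$, and your converse unpacks the paper's observation that the Gale-greatest elements of the two supports, $A$ and $[a-b+1,a]$, must coincide, which you do via the two inequalities $a-b+i\le a_i\le a-b+i$.

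There is one wording slip in the counting step. The interval $[a]$ contains exactly $a-c_i$ integers above $c_i$, and since these include $c_{i+1},\ldots,c_b$, there are \emph{at least} $b-i$ of them, not at most. That is what gives $c_i\le a-(b-i)$.
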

\begin{proof}
If $C=\{c_1<\cdots<c_b\}\subseteq[a]$, then
$c_i\le a-b+i$. Hence the allowed row sets for
$[a-b+1,a]$ are precisely the subsets of $[a]$ of size $b$.
Formula~\eqref{eq:FA} gives the identity.

Conversely, equality forces the degrees and supports to
agree. Their greatest row sets in Gale order are $A$ and
$[a-b+1,a]$, respectively, so these sets are equal.
\end{proof}

\begin{proposition}\label{prop:elementarygeneral}
The character $\chi_D$ is an elementary product if and only if
every nonempty column of $D$ is an interval and $\Gamma_D$
is edgeless.
\end{proposition}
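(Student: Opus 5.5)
Both directions will follow from Lemma~\ref{lem:interval} (which identifies $e_b(x_1,\ldots,x_a)$ as an interval-column character), Theorem~\ref{thm:block} with the partition into singletons, and Proposition~\ref{prop:recovery}.

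\emph{Sufficiency.} Suppose every nonempty column is an interval and $\Gamma_D$ has no edges. Take $\pi$ to be the partition of the nonempty column indices into singletons. No edge joins distinct blocks, so Theorem~\ref{thm:block} gives equality:
\[
 \chi_D=\prod_{D_j\ne\varnothing}F_{D_j}.
\]
Write each nonempty column as $D_j=[a_j-b_j+1,a_j]$ with $b_j=|D_j|$ and $a_j=\max D_j$. Lemma~\ref{lem:interval} then gives $F_{D_j}=e_{b_j}(x_1,\ldots,x_{a_j})$. Hence $\chi_D$ is an elementary product. If $D$ has no nonempty columns, $\chi_D=1$ is the empty product.

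\emph{Necessity.} Suppose $\chi_D=\prod_{t=1}^k e_{b_t}(x_1,\ldots,x_{a_t})$.
\begin{enumerate}
\item By Lemma~\ref{lem:interval}, each factor is the character of the single-column diagram $E^{(t)}=([a_t-b_t+1,a_t])$.
\item Apply Proposition~\ref{prop:recovery} on a common $[N]$. The multiset of nonempty columns of $D$ equals the multiset $\{[a_t-b_t+1,a_t]\}_t$. In particular, every nonempty column of $D$ is an interval.
\item After matching indices, the product becomes $\prod_j F_{D_j}$, the product of the individual column characters of $D$. So $\chi_D$ attains equality in Theorem~\ref{thm:block} for the singleton partition.
\item The strictness clause of Theorem~\ref{thm:block} says any edge of $\Gamma_D$ would make some coefficient strictly smaller. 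Hence $\Gamma_D$ is edgeless.
\end{enumerate}

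In fact step~4 is automatic. By the remark preceding \eqref{eq:twocol}, an interval $[a-b+1,a]$ has no vacant row above an occupied one except rows $<a-b+1$. So two intervals interact exactly when both have bottom parts overlapping and there is a common vacant row above. Intervals can interact; for example, $\{2\}$ with $\{2\}$ does. The edgeless condition is therefore genuinely needed, but it is supplied directly by the equality case of Theorem~\ref{thm:block} rather than by any special property of intervals.

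The only delicate point is step~3. Proposition~\ref{prop:recovery} gives the columns of the factors only as a multiset, and we need to identify the given product with $\prod_j F_{D_j}$ for the actual columns of $D$. Because the column characters of equal columns coincide, any bijection respecting the multiset identifies the two products. This is routine, and the rest is bookkeeping with trailing alphabets, which is harmless since all characters live in a common finite alphabet.
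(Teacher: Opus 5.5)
Your proof is correct and matches the paper's argument essentially step for step: Lemma~\ref{lem:interval} and Proposition~\ref{prop:recovery} identify the elementary factors with the individual nonempty columns of $D$, so $\chi_D=\prod_{D_j\ne\varnothing}F_{D_j}$, and the equality/strictness clause of Theorem~\ref{thm:block} for the singleton partition then gives the edgeless condition in both directions. Your aside calling step~4 ``automatic'' is confusingly worded, but it does not affect the argument.
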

\begin{proof}
By Lemma~\ref{lem:interval}, every elementary factor
is the character of an interval column. If their product is
$\chi_D$, Proposition~\ref{prop:recovery} shows that the
interval columns, with multiplicities, are exactly the
nonempty columns of $D$. Thus
$\chi_D=\prod_{D_j\ne\varnothing}F_{D_j}$.
Applying Theorem~\ref{thm:block} to the partition into
singleton column blocks shows that $\Gamma_D$ is edgeless.

Conversely, if $\Gamma_D$ is edgeless,
Theorem~\ref{thm:block} gives
$\chi_D=\prod_{D_j\ne\varnothing}F_{D_j}$.
By Lemma~\ref{lem:interval}, these factors are elementary
when the columns are intervals.
\end{proof}

\begin{lemma}\label{lem:elementarypatterns}
For $w\in S_n$, the graph $\Gamma_{D(w)}$ is edgeless if and only if $w$ avoids $1423$ and $1432$, and every nonempty column of $D(w)$ is an interval if and only if $w$ avoids $3142$ and $4132$.
\end{lemma}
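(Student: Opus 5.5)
The plan is to prove both equivalences directly from the definition $D_j(w)=\{i:i<w^{-1}(j),\ w_i>j\}$. We may also cite the known results mentioned in the introduction. The first equivalence is Fan--Guo's criterion~\cite{FG}: by~\eqref{eq:twocol} and Theorem~\ref{thm:block}, $\Gamma_{D(w)}$ is edgeless exactly when $\Sch_w=\prod_j F_{D_j(w)}$, and Fan and Guo showed this holds iff $w$ avoids $1423$ and $1432$. The second equivalence is~\cite[Proposition~2.3]{LMS}. Since the paper's style elsewhere is self-contained, I would still give short direct arguments, in the manner of the proof of Theorem~\ref{thm:intro}.

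\emph{Edges.} Suppose rows $w^{-1}(u)<w^{-1}(v)$ witness an edge between $D_j$ and $D_k$, with $j<k$. As in Case~I of the proof of Theorem~\ref{thm:intro}:
\begin{itemize}
\item $v>k$, and $v$ precedes both $j$ and $k$;
\item $u$ precedes $v$, and $w^{-1}(u)\notin D_j$ forces $u<j$.
\end{itemize}
The position of $j$ relative to $k$ is not yet fixed. The key point is that $w^{-1}(v)\in D_k$ and $k$ follows $v$. Then $u\,v\,j\,k$ has values $u<j<k<v$ in the order $1\,4\,2\,3$ if $j$ precedes $k$, and $1\,4\,3\,2$ if $k$ precedes $j$. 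Conversely, given an occurrence $u\,v\,a\,b$ of $1423$ or $1432$, set $j=\min(a,b)$ and $k=\max(a,b)$. Then rows $w^{-1}(u)<w^{-1}(v)$ lie vacant, respectively occupied, in both columns $j$ and $k$. Here we check $u<j$ so $u\notin$ either column, and $v>k$ precedes both.

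\emph{Intervals.} Column $D_j$ fails to be an interval iff there are rows $p<q<r$ with $p,r\in D_j$ and $q\notin D_j$, all preceding $w^{-1}(j)$. Since $q<w^{-1}(j)$, membership fails only through $w_q<j$. So we get entries $w_p>j$, $w_q<j$, $w_r>j$, then $j$, in this order. Reading values, $w_q<j<\min(w_p,w_r)$, so the pattern is $3142$ if $w_p<w_r$ and $4132$ otherwise. Conversely, an occurrence $c\,a\,d\,b$ of $3142$ or $4132$ with the entry $b$ playing the role of ``2'' shows that column $b$ contains the rows of $c$ and $d$ but not that of $a$.

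The only delicate step is the edge direction. There one must notice that the two columns' defining entries $j,k$ may appear in either order, which is exactly what produces the two patterns $1423$ and $1432$, and must check that $v$ precedes both. This is routine, but I would state it carefully.
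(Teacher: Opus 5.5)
Your proof is correct and follows essentially the same argument as the paper. An interaction witness gives entries $u<j<k<v$ with $u,v$ preceding both $j$ and $k$, yielding $1423$ or $1432$ according to the order of $j,k$; a gap in $D_j$ gives entries $w_p\,w_q\,w_r\,j$ with $w_q<j<\min(w_p,w_r)$, yielding $3142$ or $4132$; and the converses are read off in the same way.
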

\begin{proof}
Suppose $D_j,D_k$ interact, with $j<k$.
By the definition of the Rothe columns, a witness consists
of rows $w^{-1}(u)<w^{-1}(v)$, where the entries $u,v$
occur before $j,k$ and satisfy $u<j<k<v$.
Depending on which of $j,k$ occurs first,
the selected entries form $1423$ or $1432$. Conversely,
either pattern provides just such a witness.

For the interval condition, suppose that a nonempty column
$D_j$ is not an interval. It then has a gap, a vacant
row between two occupied rows. Let $u,v,z$ be the entries
of $w$ in these rows, from top to bottom. By the definition of $D_j$, the occupied rows
lie above row $w^{-1}(j)$ and contain values larger than
$j$, while the intervening vacant row contains a value
smaller than $j$. Hence
$w^{-1}(u)<w^{-1}(v)<w^{-1}(z)<w^{-1}(j)$ and
$v<j<\min(u,z)$. The selected entries occur as
$u\,v\,z\,j$, giving pattern $3142$ or $4132$ according
to whether $u<z$ or $z<u$. Either pattern conversely
gives a gap in the column indexed by the last selected entry.
\end{proof}

The edgeless condition is equivalent to the equality
criterion of Fan and Guo~\cite{FG}. The characterization using
interval columns is due to Liu, M\'esz\'aros, and
St.~Dizier~\cite[Proposition~2.3]{LMS}.

\begin{proof}[Proof of Theorem~\ref{thm:elementary}]
Proposition~\ref{prop:elementarygeneral} and
Lemma~\ref{lem:elementarypatterns} give the avoidance
criterion. The factors are those in~\eqref{eq:elementaryformula}
by Lemma~\ref{lem:interval}. By Proposition~\ref{prop:recovery},
the interval columns are determined with multiplicities.
Each such column $A$ fixes $b=|A|$ and $a=\max A$,
so the factorization is unique up to order.
\end{proof}

Figure~\ref{fig:elementaryexamples} shows the Rothe diagrams
for $2431$, $1423$, and $3142$, from left to right. They
illustrate an elementary product and the two obstructions in
Proposition~\ref{prop:elementarygeneral}. The dashed boxes
group the nonempty column labels by connected component;
labels of empty columns remain unboxed. For $w=2431$, the
nonempty Rothe columns are
\[
 D_1=[1,3],\qquad D_3=\{2\}.
\]
These intervals do not interact, giving
\[
 \Sch_{2431}=e_3(x_1,x_2,x_3)e_1(x_1,x_2).
\]
For $1423$, the two nonempty columns are both
$\{2\}$, so they interact:
\[
 \Sch_{1423}=x_1^2+x_1x_2+x_2^2\ne(x_1+x_2)^2.
\]
For $3142$, the graph is edgeless, but
$D_2=\{1,3\}$ is not an interval.

\begin{figure}[htbp]
\centering
\begin{tikzpicture}[cf diagram,x=16pt,y=-16pt,
  cf full grid/.style={line width=0.25pt,
    dash pattern=on 1.1pt off 1.1pt}]
  \begin{scope}
    \fill[black!15] (-0.5,-0.5) rectangle (0.5,2.5);
    \fill[black!15] (1.5,0.5) rectangle (2.5,1.5);
    \foreach \k in {0,...,4} {
      \draw[cf full grid] (\k-0.5,-0.5)--(\k-0.5,3.5);
      \draw[cf full grid] (-0.5,\k-0.5)--(3.5,\k-0.5);
    }
    \foreach \r in {1,...,4}
      \node[anchor=east] at (-0.95,\r-1) {$\r$};
    \foreach \j in {1,...,4}
      \node (elementarya\j) at (\j-1,-1.05) {$D_{\j}$};
    \draw (-0.5,-0.5) rectangle (0.5,2.5);
    \draw (-0.5,0.5)--(0.5,0.5) (-0.5,1.5)--(0.5,1.5);
    \draw (1.5,0.5) rectangle (2.5,1.5);
    \begin{scope}[on background layer]
      \node[cf component,inner xsep=0.6pt,inner ysep=2pt,rounded corners=1.1pt,fit=(elementarya1)] {};
      \node[cf component,inner xsep=0.6pt,inner ysep=2pt,rounded corners=1.1pt,fit=(elementarya3)] {};
    \end{scope}
    \node at (1.5,4.2) {(a) $D(2431)$};
  \end{scope}
  \begin{scope}[xshift=96pt]
    \fill[black!15] (0.5,0.5) rectangle (2.5,1.5);
    \foreach \k in {0,...,4} {
      \draw[cf full grid] (\k-0.5,-0.5)--(\k-0.5,3.5);
      \draw[cf full grid] (-0.5,\k-0.5)--(3.5,\k-0.5);
    }
    \foreach \r in {1,...,4}
      \node[anchor=east] at (-0.95,\r-1) {$\r$};
    \foreach \j in {1,...,4}
      \node (elementaryb\j) at (\j-1,-1.05) {$D_{\j}$};
    \draw[line width=0.5pt]
      (elementaryb2.east)--(elementaryb3.west);
    \draw (0.5,0.5) rectangle (2.5,1.5);
    \draw (1.5,0.5)--(1.5,1.5);
    \begin{scope}[on background layer]
      \node[cf component,inner xsep=0.6pt,inner ysep=2pt,rounded corners=1.1pt,fit=(elementaryb2)(elementaryb3)] {};
    \end{scope}
    \node at (1.5,4.2) {(b) $D(1423)$};
  \end{scope}
  \begin{scope}[xshift=192pt]
    \fill[black!15] (-0.5,-0.5) rectangle (1.5,0.5);
    \fill[black!15] (0.5,1.5) rectangle (1.5,2.5);
    \foreach \k in {0,...,4} {
      \draw[cf full grid] (\k-0.5,-0.5)--(\k-0.5,3.5);
      \draw[cf full grid] (-0.5,\k-0.5)--(3.5,\k-0.5);
    }
    \foreach \r in {1,...,4}
      \node[anchor=east] at (-0.95,\r-1) {$\r$};
    \foreach \j in {1,...,4}
      \node (elementaryc\j) at (\j-1,-1.05) {$D_{\j}$};
    \draw (-0.5,-0.5) rectangle (1.5,0.5);
    \draw (0.5,-0.5)--(0.5,0.5);
    \draw (0.5,1.5) rectangle (1.5,2.5);
    \begin{scope}[on background layer]
      \node[cf component,inner xsep=0.6pt,inner ysep=2pt,rounded corners=1.1pt,fit=(elementaryc1)] {};
      \node[cf component,inner xsep=0.6pt,inner ysep=2pt,rounded corners=1.1pt,fit=(elementaryc2)] {};
    \end{scope}
    \node at (1.5,4.2) {(c) $D(3142)$};
  \end{scope}
\end{tikzpicture}
\caption{An elementary product and two obstructions.}
\label{fig:elementaryexamples}
\end{figure}

Allowing repeated initial alphabets matters. A
\emph{standard elementary monomial} (SEM) has the form
\[
 \prod_{i\ge1}e_{a_i}(x_1,\ldots,x_i),
 \qquad 0\le a_i\le i,
\]
where $e_0=1$ and all but finitely many $a_i$ vanish.
Woodruff~\cite[Definition~2 and Theorem~4]{Woodruff}
characterized single-SEM Schubert polynomials by avoidance
of $312$ and $1432$. In contrast,
$\Sch_{312}=x_1^2=e_1(x_1)^2$ is an elementary product
that uses the alphabet $\{x_1\}$ twice.

\section*{Acknowledgements}
This work was supported by the Natural Science Foundation of Tianjin City
(Grant No. 25JC-QNJC00250) and the National Natural Science Foundation of China
(Grant No. 12001398).

We used ChatGPT during the preparation of this work, primarily for writing and editing in English.

\end{document}